\def\4{\color{red}}
\def\3{\color{cyan}}
\def\MM{\mathcal M}
\def\bL{\mathbf L}
\def\CU{\mathcal U}
\let\geq\ge
\let\leq\le
\def\PP{\mathbb P}
\def\cS{\mathcal S}
\def\fT{\frak T}
\let\goal=\gamma
\def\name{351}
\def\pBnd{\tilde\Bnd}
\def\bset{\frak{b}}
\def\Bset{\frak{B}}
\mathchardef\mincount=350
\def\mpat{\tilde\pat}
\def\mval{\frak t}
\def\loccit{\latin{loc\PERIOD~cit}}
\def\ltitle{\subsection{The lattice \lname{\thelattice}}}
\def\ltext{%
 There are \lcount{\thelattice} pairs $\bN\ni\hbar$, and
 $\bnd(\fF)\le\lmax{\thelattice}$%
 \iffullversion.\else\ (see \autoref{tab.\thelattice}).\fi
}
\def\latticetext{\iffullversion\ltitle\ltext\else
 \ifnum\lmax{\thelattice}>\mincount\ltitle\ltext\fi\fi
}
\def\config#1#2{\hyperlink{#1-#2}{$\##2$}}
\def\specvec#1{\bar{#1}}
\def\vv#1{\specvec1_{#1}}
\def\cw#1{[#1]}
\def\units{^\times}
\def\sdif{\mathbin\vartriangle}
\def\IS{\Cal I}
\def\CL{\Cal L}
\def\CA{\Cal A}
\def\CE{\Cal E}
\def\minitab#1{\vcenter\bgroup\rm
 \def\-##1{\setbox0\hbox{$00$}\hbox to\wd0{\hss$##1$\hss}}%
 \let\\\cr
 \halign\bgroup\strut\hss$##$&&#1\hss$##$\hss\cr}
\def\endminitab{\crcr\egroup\egroup}
\title{Planes in cubic fourfolds}
\author{Alex Degtyarev}
\address{%
Bilkent University\\
Department of Mathematics\\
06800 Ankara, Turkey}
\thanks{%
The first author was partially supported by the
T\"{U}B\DOTaccent{I}TAK grant 118F413.
The second author
was supported in part by
the ANR grant ANR-18-CE40-0009 ENUMGEOM.
The third author was supported by the Research
Council of Norway project no. 250104.
}
\email{degt@fen.bilkent.edu.tr}
\author{Ilia Itenberg}
\address{%
Institut de Math\'{e}matiques de Jussieu--Paris Rive Gauche\\
Sorbonne Universit\'{e} \\
4 place Jussieu, 75252 Paris Cedex 5, France}
\email{ilia.itenberg@imj-prg.fr}
\author{John Christian Ottem}
\address{%
Department of Mathematics,
University of Oslo,
Box 1053, Blindern,
0316 Oslo, Norway}
\email{johnco@math.uio.no}
\keywords{%
Cubic fourfold, integral lattice, Niemeier lattice, discriminant form%
}
\subjclass[2010]{%
Primary: 14J35, 14N20;
Secondary: 14N25, 14P25%
}
\begin{document}

\begin{abstract}
We show that the maximal number of planes in a complex smooth cubic fourfold in ${\mathbb P}^5$
is $405$, realized by the Fermat cubic only; the maximal number of real planes
in a real smooth cubic fourfold is $357$, {\2realized by the so-called
Clebsch--Segre cubic.}
Altogether, there are but three (up to projective equivalence) cubics with more than
$350$ planes.
\end{abstract}

\maketitle

\section{Introduction}\label{Intro}

The study of linear spaces in projective hypersurfaces is a classical problem in algebraic geometry.
The 27 lines on a smooth cubic surface {\2in $\Cp3$,}
{\2going} back to A.~Cayley and G. Salmon in the 19th century
\cite{Cayley:cubics}, and {\2at most $64$} lines on quartic surfaces,
going back to B.~Segre~\cite{Segre}, are two of the most famous examples.
In the last decade, there has been a substantial progress in studying and counting lines
and {\2other} low degree rational curves on
{\2polarized} $K3$-surfaces, see \cite{rams.schuett,
degt:lines,
degt:800,
degt:conics,
degt:sextics,
DIS}.
Moreover, thanks to the global Torelli theorem for $K3$-surfaces \cite{Pjatecki-Shapiro.Shafarevich}
and the surjectivity of the period map \cite{Kulikov:periods}, it has also been possible to obtain a complete description of the surfaces with
large configurations of lines {\2or, sometimes, conics}.

While varieties of lines play an important {\3role} in the geometry of hypersurfaces
(especially cubic threefolds \cite{clemens1972intermediate}
and cubic fourfolds \cite{hassett, voisin}), much less is known about linear
spaces of higher dimension.

In this paper, we
{\3study} $2$-planes in smooth cubic fourfolds $X \subset \Cp5$.
Planes in cubic fourfolds have already appeared in many contexts: they
are a key ingredient in Voisin's proof of the global Torelli theorem \cite{voisin}, they define one of the so-called Hassett divisors in the moduli space of cubics \cite{hassett},
and they serve as important examples in connection with the rationality problem for cubic fourfolds.

As in the case of lines on $K3$-surfaces, the plane counting problem for
smooth cubic fourfolds is not strictly enumerative in the sense that a
generic cubic contains no planes at all. Therefore, one looks for
estimates on
the maximal number of planes in a smooth cubic, and, if
possible,
a description of
the cubics realizing this maximum. In fact, the
similarities between $K3$-surfaces and cubic fourfolds are much deeper, as
cubic fourfolds have their own version of the global Torelli theorem
\cite{voisin} and surjectivity of the period map \cite{laza}. This can be
used to {\3 recast} the original geometric problem in purely
lattice-theoretic terms and eventually obtain a complete characterisation of
extremal cubics. Formally, our characterisation is in terms of the periods.
However,
thanks to the injectivity of the period map,
we can
identify the extremal cubics found
by comparing
their configurations of planes with those of known explicit examples.

Our main result is the following theorem.

\theorem[see \autoref{proof.main}]\label{th.main}
{\2Let $X\subset\Cp5$ be a complex smooth cubic fourfold.}
Then, either $X$ has at most $350$ planes, or, up to projective equivalence,
$X$ is
\roster*
\item
the Fermat cubic {\rm (}with $405$ planes{\rm )}, see \autoref{section:Fermat}
and \eqref{eq.Lmax.1}, or
\item
the Clebsch--Segre cubic {\rm (}with $357$ planes{\rm )},
see \autoref{section:HS} 
and  \eqref{eq.Lsub.2},
or
\item
the {\4\name-cubic} \rom(with $351$ planes\rom),
{\4see \autoref{section:351} and \eqref{eq.Lmisc.1}.}
\endroster
\endtheorem

We also consider a similar problem for real planes in real cubics.
Recall that a {\em real algebraic variety} is a complex algebraic variety $X$ equipped with
a {\em real structure}, \ie, anti-holomorphic involution $c \: X \to X$. A subvariety $P \subset X$
is said to be {\em real} if $c(P) = P$. One can show (\cf. \autoref{construction_maps} and its proof)
that any real structure on a cubic $X \subset \Cp5$ in appropriate coordinates in $\Cp5$
is induced from the coordinatewise complex conjugation.
In these coordinates, both $X$ and a real plane $P \subset X$ can be given
by equations with real coefficients.

For the number of real planes, we have the following stronger bound.

\theorem[see \autoref{proof.main_real}]\label{th.main_real}
The number of real planes in a real smooth cubic $Y \subset \Cp5$
is at most $357$, the equality holding if and only if
$Y$ is
projectively equivalent over $\R$ to the Clebsch--Segre cubic {\rm (}see \autoref{section:HS}{\rm )}.
\endtheorem

Note that the Clebsch--Segre cubic \eqref{CScubic}
can be regarded as a four-dimensional analogue of the Clebsch cubic surface,
which also has all of its 27 lines real.


\remark\label{remark.other_values}
\autoref{th.main} provides a sharp upper bound on the total number of planes.
Another interesting question is that on the possible values that can be taken by the plane count.
It appears that, for smooth cubics, the list is much more sparse than those
counting lines on polarized $K3$-surfaces.
The values observed in our computation are
\catcode`\[\active\def[#1,#2]{#1\mathbin{\smskip..\smskip}#2}%
\def\smskip{\mskip-.5\thinmuskip}
\[*
\gathered
[ 0, 225 ], 227, 229, 231, 233, 235, 237, 239, 241, 243, 245, 247, 249,\\
  255, 257, 259, 261, 267, 273, 285, 297, 351, 357, 405,
\endgathered
\]
\catcode`\[=12
but we do not assert that this list is complete.
\endremark

Given the conclusion of Theorem \ref{th.main}, one can also ask about the
maximal number of planes on singular cubic fourfolds. \mnote{JCO: Added nodal
case} There are easy examples of such cubics where the number of planes is
infinite; however, if one restricts to nodal cubic fourfolds, the number is
finite.
{\4Furthermore, we show that, similar to several known results on rational curves
on $K3$-surfaces, the presence of nodes reduces the maximal number of
planes.}

\begin{proposition}[{\4see \autoref{proof.nodal}}]\label{nodal}
The number of planes in a nodal cubic fourfold is at most {\4$302$}.\mnote{\4Dg:
didn't we agree to use ``in''? Changed to $302$.}
\end{proposition}

{\4The best example known to us has $291$ planes, see \autoref{ex.291};
conjecturally, $291$ is the sharp upper bound in the presence of nodes.}

As
mentioned above, our main approach is lattice-theoretic.
More precisely, given an abstract graph, there is a way to decide
whether it is
{\2realized}
by the configuration of planes
{\2in} a smooth cubic fourfold.
However, unlike the case of lines on a polarized $K3$-surface,
we lack geometric intuition (\eg, elliptic pencils)
which would {\3 narrow the search {\2down} to a sufficiently small collection of graphs}.
For this reason, we take a detour and replant a (modified) abstract lattice of algebraic
cycles to a Niemeier lattice (\ie, one of the $24$ positive definite even unimodular lattices of
rank~$24$), the planes mapping to certain vectors of square $4$.
This approach has a number of benefits. First, instead of dealing with abstract graphs
of \latin{a priori} unbounded complexity, we merely consider subsets of several finite sets
known in advance; in particular, this implies the fact (not immediately obvious) that
the number of planes is uniformly bounded. Second, these finite sets
have rich intrinsic structure that can be used in the construction of large
realizable subsets.
Finally, working with known sets, all symmetry groups can be
expressed in terms of permutations, which makes the computation in
\GAP~\cite{GAP4} {\3 very} effective.

The idea of using Niemeier lattices is not new (\cf. \cite{Kondo,Nikulin:degenerations,Nishiyama}).
The novelty of our treatment is in the fact that the original lattice is odd and,
therefore, {\2it} needs to be modified.
Of course, one could have used embeddings to odd unimodular definite lattices of rang $24$,
but their number is overwhelming.

The paper is organized as follows. In \autoref{4-folds}, we fix the terminology
and {\3 recall} a few basic facts related to integral lattices and cubic fourfolds.
{\3 Towards} the end, in \autoref{section:Fermat} and \autoref{section:HS}, we describe
the geometric configurations of planes in the two extremal cubics, \viz. Fermat and Clebsch--Segre.
In \autoref{S.Neimeier}, we replant the lattice of algebraic cycles of a cubic fourfold
to a Niemeier lattice, thus reducing the original geometric problem to an arithmetic
one;
the algorithms used to solve the latter are outlined in \autoref{algorithms}.
In \autoref{S.few}--\autoref{Leech},
the 24 Niemeier lattices are treated one by one;
this is followed by the proofs of
Theorems \ref{th.main} and \ref{th.main_real}
in \autoref{S.proofs}.

\subsection*{Acknowledgments}
We would like to thank K. Hulek, Z. Li and M. Sch\"{u}tt for
{\2a number of} fruitful discussions {\4and to the anonymous referee of this
paper for several valuable remarks.}
Part of this paper was written during the first and second authors research stay
at the {\em Max-Planck-Institut f\"{u}r Mathematik}; we are grateful to this institution
for its hospitality and support.
The third author acknowledges support from the Centre for Advanced Study, Oslo,
and the {\em Motivic Geometry} programme.

\section{Preliminaries}\label{4-folds}

The principal goal of this section is {\2 to fix} the terminology/notation
and to cite,
in a convenient form, a few fundamental results
used in the sequel.

\subsection{Lattices\pdfstr{}{
 {\rm(see~\cite{Nikulin:forms})}}}\label{s.lattice}
A \emph{lattice} is a free abelian group~$L$ of finite rank equipped with a
symmetric bilinear form $b\:L\otimes L\to\Z$.
Since {\2the form} $b$ is assumed fixed (and omitted from the notation), we abbreviate
$x\cdot y:=b(x,y)$ and $x^2:=b(x,x)$.
The
\emph{determinant} $\det L\in\Z$ is the determinant of the Gram matrix of~$b$
in any integral basis; $L$ is called \emph{nondegenerate} (\emph{unimodular})
if $\det L\ne0$ (respectively, $\det L=\pm1$). The \emph{inertia indices}
$\Gs_\pm L$ are those of the quadratic space $L\otimes\R\to\R$,
$x\otimes r\mapsto r^2x^2$.

A sublattice $S \subset L$ is called {\em primitive}
if the quotient $L/S$ is torsion free.
A {\em $d$-polarized lattice} is a lattice equipped with a distinguished element $h$, $h^2 = d$.
Morphisms in the category of (polarized) lattices are called {\em isometries}.

A lattice~$L$ is called \emph{even} if
$x^2=0\bmod2$ for all $x\in L$; otherwise, $L$ is \emph{odd}. A
\emph{characteristic vector} is an element $v\in L$ such that
$x^2=x\cdot v\bmod2$ for all $x\in L$. If $L$ is unimodular, a characteristic
vector exists and is unique $\bmod\,2L$.
If $v\in L$ is characteristic, the orthogonal complement $v^\perp\subset L$
is even.

For lattices of rank $1$ and $2$ we use the abbreviations
\roster*
\item $[a] := \Z x$, $x^2 = a$;
\item $[a, b, c] := \Z x + \Z y$, $x^2 = a$, $x \cdot y = b$, $y^2 = c$.
\endroster
The \emph{hyperbolic plane} $\bU := [0, 1, 0]$
is the unique unimodular even lattice of rank~$2$.

A nondegenerate lattice~$L$ admits a canonical inclusion
\[*
L\into L\dual:=\bigl\{x\in L\otimes\Q\bigm|
 \mbox{$x\cdot y\in\Z$ for all $y\in L$}\bigr\}
\]
to the dual group~$L\dual$.
The finite abelian group $\CL:=\discr L:=L\dual\!/L$ ($q_L$
in~\cite{Nikulin:forms}) is called the \emph{discriminant group} of~$L$.
Clearly, $\ls|\CL|=(-1)^{\Gs_-L}\det L$.
This group is equipped with the nondegenerate symmetric bilinear form
\[*
\CL\otimes\CL\to\Q/\Z,\quad
(x\bmod L)\otimes(y\bmod L)\mapsto(x\cdot y)\bmod\Z,
\]
and, if $L$ is even, its quadratic extension
\[*
\CL\to\Q/2\Z,\quad x\bmod L\mapsto x^2\bmod2\Z.
\]
We denote by $\CL_p:=\discr_pL:=\CL\otimes\Z_p$ the $p$-primary
components of $\discr L$. The $2$-primary component~$\CL_2$ is called \emph{even}
if $x^2\in\Z$ for all order~$2$ elements $x\in\CL_2$; otherwise, $\CL_2$ is
\emph{odd}. The \emph{determinant} $\det\CL_p$ is the determinant of the
``Gram matrix'' of the quadratic form in any minimal set of generators.
(This is equivalent to the alternative definition given
in~\cite{Nikulin:forms}.)
Unless $p=2$ and $\CL_2$ is odd (in which case
the determinant is not defined or used),
we have $\det\CL_p=u_p/\ls|\CL_p|$, where $u_p$ is a well-defined element of
$\Z_p\units/(\Z_p\units)^2$.

The \emph{length} $\ell(\Cal A)$ of a finite abelian group~$\Cal A$ is the
minimal number of generators of~$\Cal A$. We abbreviate $\ell_p(\Cal
A):=\ell(\Cal A\otimes\Z_p)$ for a prime~$p$.

Given a lattice~$L$ and $q\in\Q$, we use the notation $L(q)$ for the same
abelian group with the form $x\otimes y\mapsto q(x\cdot y)$, assuming that it
is still a lattice. We abbreviate $-L:=L(-1)$, and this notation applies to
discriminant forms as well. The notation $nL$, $n \in \NN$, is used for the
orthogonal direct
sum of $n$ copies of~$L$.

A cyclic group $\Z/b$ equipped with a quadratic form $1 \mapsto a \bmod 2\Z$
is denoted by
$$\Bigl[ \dfrac{a}{b} \Bigr]; \quad \text{we assume that $a, b \in \Z$, $\gcd(a, b) = 1$, $ab = 0 \bmod 2$}.$$
Another notation used in the description of the discriminants
is $\CU := \discr \bU(2)$.

A \emph{root} in an even lattice~$L$ is a vector of square~$\pm2$.
A \emph{root system} is a positive definite lattice generated by roots. Any
root system has a unique decomposition into orthogonal direct sum of
irreducible components, which are of types $\bA_n$, $n\ge1$, $\bD_n$,
$n\ge4$, $\bE_6$, $\bE_7$, or~$\bE_8$ (see, \eg, \cite{Bourbaki:Lie}),
according to their \emph{Dynkin diagrams}.

A \emph{Niemeier lattice} is a positive definite unimodular even lattice of
rank~$24$. Up to isomorphism, there are $24$ Niemeier lattices
(see~\cite{Niemeier}): the \emph{Leech lattice}~$\Lambda$, which is root free,
and $23$ lattices \emph{rationally} generated by roots.
In the latter case, the isomorphism class of a lattice $N:=\N(\bR)$ is uniquely
determined by that of its
maximal root system~$\bR$. For more details, see~\cite{Conway.Sloane}.



\subsection{Cubic fourfolds\pdfstr{}{
 {\rm(see~\cite{hassett, laza, voisin})}}}\label{preliminaries}
Let $X$ be a smooth cubic fourfold in $\PP^5$. The middle Hodge numbers of $X$
are as follows:
$$h^{0,4}=h^{4,0}=0, \;\;\; h^{1,3}=h^{3,1}=1, \;\;\; h^{2,2}=21.$$
We
are
concerned with the middle integral cohomology group $H^4(X) := H^4(X; \ZZ)$; via
the Poincar\'{e} duality isomorphism, this group is canonically
identified with $H_4(X; \ZZ)$.
(From now on, unless stated otherwise, all homology and cohomology groups are with coefficients in $\Z$.)
With respect to the intersection form,
$H^4(X)$ is
{\2the unique (up to isomorphism)}
odd unimodular lattice of signature $(21,2)$.
This lattice is canonically $3$-polarized,
and the distinguished class $h_X$, \viz. the square of the hyperplane divisor of $X$,
is characteristic. There is a lattice isomorphism
$$
H^4(X) \simeq \bL:= 21[+1] \oplus 2[-1], \quad h_X \mapsto h: = (1, \ldots, 1, 3, 3).
$$
Alternatively,
$$\bL \simeq 3[+1] \oplus 2\bU \oplus 2\bE_8, \quad h \mapsto (1,1,1) \in 3[+1].$$
In particular, the sublattice
$\bL^0$
of primitive classes
({\ie, the orthogonal complement of $h$ in $\bL$)\mnote{DgIt: modified}
decomposes as
$$
\bL^0\simeq \bA_2 \oplus 2\bU \oplus 2\bE_8.
$$
The choice of a polarized lattice isomorphism $\phi\: H^4(X) \to \bL$
is called a \emph{marking} of the cubic fourfold $X$, and we call $(X,\phi)$ a {\em marked cubic fourfold}.

By definition, the sublattice $M_X := H^{2,2}(X; \C)\cap H^4(X)$ of integral Hodge classes
is primitive in $H^4(X)$.
The Hodge--Riemann relations imply that $M_X$ is positive definite.
By \cite{Voisin:aspects},
the integral Hodge conjecture holds for $X$, so that
$M_X$ is generated (over $\Z$) by {\2the} classes of algebraic surfaces in $X$.

We denote by $T_X := M_X^\perp$ the \emph{transcendental lattice} of $X$.

\subsection{The global Torelli theorem}

The {\em period} of a marked cubic fourfold
$(X,\phi)$ is defined
as the line $\omega_X=\phi(H^{3,1}(X; \C)) \subset \bL^0 \otimes \C$.
Thus, denoting by $\MM$ the moduli space of marked cubic fourfolds $(X,\phi)$, we may define the {\em period map}
\[*
\alignedat3
\mathcal P \:&& \MM\,\,\, & \to\,\,\, {\mathcal D}&&\subset \PP(\bL^0 \otimes \C),\\
&&(X,\phi) &\mapsto[\omega_X]\rlap,
\endalignedat
\]
where ${\mathcal D}$ is the {\em period domain}, \ie,
a distinguished connected component of
$$\bigl\{x \in \PP(\bL^0 \otimes \C) \bigm| x^2=0, \ x \cdot \bar x<0\bigr\},$$
see \cite{hassett:survey}. The component is distinguished by
{\2the so-called}
\emph{positive sign structure}, \ie,
a coherent choice of orientations
of the maximal negative definite subspaces of $\bL \otimes \R$.
More generally, we can consider cubic fourfolds $X$ polarized by a positive definite
polarized
sublattice $h \in K \subset \bL$.
This gives us a moduli space $\MM_K$ of dimension $21-\rank K$ and an associated period domain $\mathcal D_K$,
which is a connected component of
$$\bigl\{x\in \PP(K^\perp \otimes \C) \bigm| x^2=0, \ x \cdot \bar x<0\bigr\}.$$

The following result is a version of the global Torelli theorem for cubic
fourfolds,
{\2 which is due to C. Voisin \cite{voisin}.}
\theorem\label{construction_maps}
Let $X, Y \subset \Cp5$ be two smooth cubic fourfolds, with
{\4the respective classes $h_X$, $h_Y$ and periods $\omega_X$, $\omega_Y$} as above.
Then, an isometry
$$f^*\: H^4(Y) \to H^4(X)$$
is induced by a holomorphic
{\rm (}respectively, anti-holomorphic{\rm )} projective isomorphism $f\: X \to Y$ if and only if
\roster
\item $f^*$ is polarized, \ie, $f^*(h_Y) = h_X$,
\item $f^*(\omega_Y) = \omega_X$ {\rm (}respectively, $f^*(\omega_Y) = \overline\omega_X${\rm )}.
\endroster
If
{\2an} isomorphism $f$ {\2as above} exists, it is unique.
\endtheorem

\begin{proof}
The holomorphic statement is essentially found in \cite{voisin} (see also \cite{Finashin.Kharlamov:chirality}),
and the anti-holomorphic counterpart is immediately obtained by composing with the complex conjugation.
The uniqueness follows from the well-known fact that an automorphism $X\to X$
which acts as the identity on $H^4(X)$
must be the identity (which can be proved using the Lefschetz fixed-point
{\2theorem).}
\end{proof}

\corollary[\cf. {\cite[Lemma 3.8]{DIS}} or {\cite{Finashin.Kharlamov:cubics}}]\label{real_structure}
A smooth cubic $X \subset \Cp5$ admits a real structure identical on $M_X$
if and only if $T_X$ contains a sublattice
$-\bA_1$ or $\bU(2)$.
\done
\endcorollary

A major consequence of \autoref{construction_maps} is the fact that the period map
$\mathcal P \: \MM \to \mathcal D$ is injective; its image was computed by
R. Laza \cite{laza} and E. Looijenga
\cite{looijenga2009period}.

\begin{theorem}[Surjectivity of the period map, see {\cite[Theorem 1.1]{laza}}]\label{polarized_lattices}
A $3$-polarized lattice $M \ni h$ admits an isometry $\Gf$ onto $M_X \ni h_X$
for a smooth cubic $X \subset \Cp5$ if and only if
\begin{enumerate}
\item $M$ is positive definite and $h$ is a characteristic vector in $M$,
\item $M$ admits a primitive embedding into $\bL$ such that $M^\perp$
is even,
\item\label{isotropic-like} there is no element $e \in M$ such that $e^2 = e \cdot h = 1$,
\item there is no element $e \in M$ such that $e^2 = 2$ and $e \cdot h = 0$.
\end{enumerate}
Under these assumptions, $\MM_M\subset \MM$ has codimension $\rank(M)-1$.
\done
\end{theorem}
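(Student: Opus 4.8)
\emph{Overall strategy.} The statement packages together three facts: the global Torelli theorem \autoref{construction_maps} (hence injectivity of $\mathcal P\:\MM\to\mathcal D$), the description of $M_X$ as the group of integral Hodge classes together with the integral Hodge conjecture (both recalled in \autoref{preliminaries}), and the computation by R.~Laza and E.~Looijenga \cite{laza,looijenga2009period} of the image of the period map, $\mathcal P(\MM)=\mathcal D\smallsetminus(\mathcal D_2\cup\mathcal D_6)$, where $\mathcal D_d\subset\mathcal D$ denotes the Heegner divisor cut out by the classes $e\in\bL$ for which $\Z h+\Z e$ is a positive definite rank-$2$ sublattice of discriminant~$d$. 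Granting the last fact, the plan is to split the proof into the two implications, both of which become lattice bookkeeping on top of \autoref{construction_maps}.

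\emph{Necessity.} Suppose $M=M_X$ and $h=h_X$ for a smooth cubic $X\subset\Cp5$. Then $M$ is positive definite and $h$ is characteristic (\autoref{preliminaries}), which is~(1); the sublattice $M_X\subset H^4(X)\simeq\bL$ is primitive and its orthogonal complement $T_X$ lies in $h_X^\perp$, which is even because $h_X$ is characteristic (see \autoref{s.lattice}), which is~(2). For~(3) and~(4): a class $e\in M_X$ with $e^2=e\cdot h=1$ (resp. $e^2=2$, $e\cdot h=0$) would span with $h$ a positive definite rank-$2$ sublattice of discriminant~$2$ (resp.~$6$); since the period $\omega_X$ is orthogonal to $M_X$, this would force $\omega_X\in\mathcal D_2$ (resp. $\mathcal D_6$), contradicting $\omega_X\in\mathcal P(\MM)$.

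\emph{Sufficiency.} Assume~(1)--(4). Using~(2), fix a primitive embedding $M\into\bL$ carrying $h$ to the polarization class and with $T:=M^\perp$ even; then $T\subset\bL^0$ and, by~(1), $\Gs_-T=2$, so the period domain $\mathcal D_M\subset\PP(T\otimes\C)$ of \autoref{preliminaries} is a nonempty type-IV domain contained in $\mathcal D$. Since the embedding is primitive, $T^\perp\cap\bL=M$, so for every $y\in\bL\smallsetminus M$ the hyperplane section $y^\perp\cap\mathcal D_M$ is a proper analytic subset of $\mathcal D_M$; furthermore, by~(3) and~(4), no class $e\in\bL$ with $e^2=e\cdot h=1$ or with $e^2=2$, $e\cdot h=0$ belongs to $M$, hence the sections $e^\perp\cap\mathcal D_M$ are proper as well. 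Removing these countably many proper subsets yields a very general $x\in\mathcal D_M$ with $x^\perp\cap\bL=M$ and $x\notin\mathcal D_2\cup\mathcal D_6$; by Laza's description such an $x$ equals $\omega_X$ for some marked smooth cubic $X$, and then (under the marking) $M_X=x^\perp\cap\bL=M$ by \autoref{preliminaries}, the marking identifying $h$ with $h_X$, so the inverse of the marking restricted to $M_X$ is the required isometry $\Gf$. Finally $\dim\MM_M=\dim\mathcal D_M=\rank T-2=21-\rank M$ and $\dim\MM=20$, whence the stated codimension.

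\emph{The main obstacle.} Essentially all of the depth is hidden in the image computation of Laza--Looijenga --- the comparison of the GIT quotient of cubic fourfolds with the Looijenga and Baily--Borel compactifications of $\mathcal D/\Gamma$ --- which I would cite rather than attempt to reprove. Inside the argument above, the only delicate point is to check that conditions~(3) and~(4) correspond \emph{precisely} to avoiding the two excluded divisors $\mathcal D_2$ and $\mathcal D_6$, i.e. that the rank-$2$ overlattices $\Z h+\Z e$ produced by such $e$ have discriminant exactly $2$ and $6$ and conversely; once this matching is in place, the very-general-point argument together with primitivity of $M$ (to ensure $T^\perp\cap\bL=M$) completes the proof.
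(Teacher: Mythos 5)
The paper does not actually prove this theorem: it is stated with a citation to Laza \cite[Theorem~1.1]{laza} and the no-proof marker, so there is no argument in the paper to compare against. Your proposal correctly reconstructs what the citation entails. In particular, the necessity direction and the matching of conditions~(3) and~(4) to the two excluded Heegner divisors is the key translation, and your bookkeeping checks out: with $h^2=3$, a positive definite $K=\Z h+\Z e$ of discriminant~$2$ forces a basis with $e^2=e\cdot h=1$ (from $3b-a^2=2$), and discriminant~$6$ forces $e^2=2$, $e\cdot h=0$ (from $3b-a^2=6$); conversely any $e$ as in~(3) or~(4) produces such a $K$. Your sufficiency argument (very general point of $\mathcal D_M$ avoiding the countably many hyperplanes $y^\perp$, $y\in\bL\smallsetminus M$, together with $T^\perp\cap\bL=M$ from primitivity and unimodularity of~$\bL$) is also correct, as is the dimension count $\dim\MM_M=\rank T-2=21-\rank M$ against $\dim\MM=20$. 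One minor inaccuracy: the proof does not actually rest on the injectivity of $\mathcal P$ (\autoref{construction_maps}), only on Laza--Looijenga's image description and the identification $M_X=\omega_X^\perp\cap\bL$; your opening sentence over-attributes to Torelli. Otherwise the argument is correct and is the natural unpacking of the cited theorem.
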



\subsection{Planes in cubic fourfolds}\label{section.planes}
Our
{\3main result is}
an upper bound on the number of planes in a smooth cubic fourfold. The following lemma appears in Starr's appendix to \cite{Browning}, where it is attributed to O. Debarre.
\begin{lemma}
Any smooth cubic fourfold contains but finitely many planes.
\done
\end{lemma}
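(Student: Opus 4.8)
The plan is to prove the finiteness statement by a standard dimension-count argument on the incidence variety of planes lying on cubic fourfolds, exactly as in Debarre's argument reproduced in Starr's appendix to \cite{Browning}. Let $G := \Gr(3, 6)$ be the Grassmannian of $2$-planes $P \subset \PP^5$; it has dimension $9$. Consider the vector space $V := H^0(\PP^5, \Cal O(3))$ of cubic forms, of dimension $\binom{8}{3} = 56$, so that $\PP(V) = \PP^{55}$ is the space of all cubic fourfolds. The key object is the incidence variety
\[*
\IS := \bigl\{ (P, [F]) \in G \times \PP(V) \bigm| P \subset \{F = 0\} \bigr\},
\]
together with its two projections $\pr_1 \: \IS \to G$ and $\pr_2 \: \IS \to \PP(V)$. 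The idea is to compute $\dim \IS$ via $\pr_1$ and then conclude that the generic fibre of $\pr_2$ over the locus of smooth cubics is finite, hence, since that locus is irreducible, every fibre over it is finite (the upper semicontinuity of fibre dimension, together with irreducibility, forces the exceptional locus where the fibre jumps to be a proper closed subset, but in fact one shows no smooth cubic lies in it).

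The first step is the fibre computation for $\pr_1$. Fix a plane $P \in G$; by homogeneity of the $\PGL_6$-action on $G$ we may take $P = \{x_3 = x_4 = x_5 = 0\}$. A cubic $F$ contains $P$ if and only if $F$ lies in the ideal $(x_3, x_4, x_5)$, i.e. $F = x_3 A + x_4 B + x_5 C$ with $A, B, C$ quadratic forms in $x_0, \dots, x_5$. Counting: the space of such $F$ has dimension equal to $\dim V$ minus the dimension of the space of cubics in $x_0, x_1, x_2$ alone, which is $56 - \binom{4}{2} = 56 - 10 = 46$; hence the fibre $\pr_1^{-1}(P)$ is a linear subspace of $\PP(V)$ of dimension $45$. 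Since all fibres of $\pr_1$ have the same dimension $45$ and $G$ is smooth irreducible of dimension $9$, the incidence variety $\IS$ is irreducible of dimension $9 + 45 = 54$.

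The second step is to exhibit a single smooth cubic fourfold containing only finitely many planes — equivalently, to show $\pr_2$ is generically finite onto its image, which then has dimension $54$, so that $\pr_2$ is dominant onto $\PP(V) = \PP^{55}$... wait, that gives $\dim \pr_2(\IS) = 54 < 55$, so the image is a proper hypersurface, which cannot be right since a generic cubic contains no plane at all. Let me recount: indeed a generic cubic fourfold contains no plane, consistent with $\dim\pr_2(\IS)=54<55=\dim\PP(V)$; the point is instead that the generic fibre of $\pr_2$ \emph{over its image} is finite, because $\dim \IS = 54 = \dim \pr_2(\IS)$. Concretely, one checks for one explicit cubic containing a plane (e.g. the Fermat cubic, or the simplest cubic of the form $x_3 A + x_4 B + x_5 C$ with generic $A, B, C$) that the set of planes it contains is finite: this is a direct computation showing that imposing "$P \subset X$" on $P \in G$ cuts out a $0$-dimensional scheme, e.g. via the fact that the general such cubic is smooth and the tangent space to $\IS$ at the corresponding point maps injectively to the tangent space of $\PP(V)$. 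Then, by upper semicontinuity of fibre dimension along $\pr_2$, the locus of cubics containing infinitely many planes is a proper closed subset of $\pr_2(\IS)$; but we want \emph{every} smooth cubic to have finitely many planes. For this final step one invokes that the smooth cubics containing at least one plane form an irreducible locus (a Hassett divisor, image of an irreducible $\IS$ restricted to the smooth locus), and a generic member of it has finitely many planes, so the jump locus, being a proper closed subset of this irreducible family, still might contain some smooth cubics — so in fact the cleanest argument is the \emph{infinitesimal} one: show that for \emph{any} smooth $X$ and any plane $P \subset X$, the normal bundle $N_{P/X}$ has $h^0(N_{P/X}) = 0$, so that $P$ is a rigid (reduced, isolated) point of the Hilbert scheme of $X$, whence finiteness by compactness of the Hilbert scheme. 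The main obstacle is precisely this normal bundle computation: from the exact sequences $0 \to N_{P/X} \to N_{P/\PP^5} \to N_{X/\PP^5}|_P \to 0$ with $N_{P/\PP^5} \cong \Cal O_P(1)^{\oplus 3}$ and $N_{X/\PP^5}|_P \cong \Cal O_P(3)$, one must use smoothness of $X$ along $P$ to control the connecting map and conclude $H^0(N_{P/X}) = 0$; this is where Debarre's argument does the real work, and it is the step I expect to require genuine care rather than bookkeeping.
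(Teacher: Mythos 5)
The paper does not prove this lemma at all: it simply cites it to Starr's appendix to \cite{Browning}, attributing the argument to Debarre, and marks it as done. So there is no ``paper's proof'' for you to match; your job was essentially to reconstruct Debarre's argument.

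Your proposal correctly abandons the dimension count (which, as you notice, only shows that a \emph{generic} cubic containing a plane has finitely many, not all smooth ones) and correctly identifies the right endpoint: prove $H^0(N_{P/X}) = 0$ for every plane $P$ in every smooth cubic $X$, so that each plane is a reduced isolated point of the Hilbert scheme, which is projective, hence finite. But you stop exactly there, explicitly conceding that you have not carried out the normal bundle computation ``where Debarre's argument does the real work.'' That is the whole content of the lemma, so this is a genuine gap. Here is how to close it. Choose coordinates with $P = \{x_3 = x_4 = x_5 = 0\}$; the connecting map $H^0(\mathcal O_P(1))^{\oplus 3} \to H^0(\mathcal O_P(3))$ from the sequence $0 \to N_{P/X} \to \mathcal O_P(1)^{\oplus 3} \to \mathcal O_P(3) \to 0$ sends $(a_3,a_4,a_5)$ to $\sum_i a_i q_i$, where $q_i := (\partial F/\partial x_i)\big|_P$. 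Since $F\big|_P \equiv 0$, the partials in the $P$-directions vanish on $P$, so smoothness of $X$ along $P$ says precisely that the three conics $q_3, q_4, q_5$ have no common zero on $P \cong \PP^2$. The Koszul complex of $(q_3,q_4,q_5)$ on $\PP^2$ is therefore exact; writing $K := \ker\bigl(\mathcal O_P(1)^{\oplus 3} \to \mathcal O_P(3)\bigr) = N_{P/X}$ and using the short exact sequence $0 \to \mathcal O_P(-3) \to \mathcal O_P(-1)^{\oplus 3} \to K \to 0$ coming from the twisted Koszul resolution, one gets $H^0(K) \hookrightarrow H^1(\mathcal O_{\PP^2}(-3))$, and both $H^0(\mathcal O_{\PP^2}(-1))$ and $H^1(\mathcal O_{\PP^2}(-3))$ vanish, so $H^0(N_{P/X}) = 0$. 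This is the step your write-up identifies but does not supply, and without it the lemma is not proved.
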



Let $X$ be a smooth cubic fourfold, $P \subset X$ a plane, and $p := [P] \in M_X$ its class.
Clearly, $h_X \cdot p = 1$, and using the normal bundle sequence, we find that $p^2=c_2(N_{P|X})=3$.
Furthermore, given two distinct planes $P_1$, $P_2$ with classes $p_1$, $p_2$,
one has the following trichotomy:
\roster*
\item $p_1 \cdot p_2 = 0$ if $P_1$ and $P_2$ are disjoint;
\item $p_1 \cdot p_2 = 1$ if $P_1$ and $P_2$ intersect at a point;
\item $p_1 \cdot p_2 = -1$ if $P_1$ and $P_2$ intersect in a line.
\endroster

This has the following important consequence.
\begin{lemma}\label{uniqueplanes}
Each class $p\in M_X$ is represented by at most one plane.
\done
\end{lemma}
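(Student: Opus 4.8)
The plan is to argue by contradiction, exploiting the trichotomy for the intersection number of two distinct planes stated just above. Suppose that two distinct planes $P_1 \ne P_2$ in the smooth cubic fourfold $X$ have the same class $p := [P_1] = [P_2] \in M_X$. Then on one hand $p \cdot p = p^2 = 3$, since every plane has self-intersection $3$; on the other hand, $p_1 \cdot p_2 = p \cdot p = 3$ as well. But the trichotomy asserts that for two \emph{distinct} planes the intersection number $p_1 \cdot p_2$ belongs to $\{-1, 0, 1\}$, depending on whether they meet in a line, at a point, or not at all. Since $3 \notin \{-1,0,1\}$, this is a contradiction, and hence $P_1 = P_2$.

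Before running this argument I would double-check the one subtle point: the trichotomy is stated for \emph{two distinct planes} $P_1, P_2$, so there is no circularity in applying it after having assumed $P_1 \ne P_2$. The three cases are genuinely exhaustive for a pair of distinct $2$-planes inside $\PP^5$ lying on a smooth cubic: two distinct planes in $\PP^5$ intersect in either the empty set, a single point, or a line (a common $\PP^2$ would force them equal), and the excerpt already records the value of $p_1\cdot p_2$ in each case. The self-intersection computation $p^2 = c_2(N_{P|X}) = 3$ is likewise already supplied. So the only content of the proof is the numerical incompatibility $3 \notin \{-1, 0, 1\}$.

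I do not anticipate a real obstacle here; this is a short formal consequence of facts established immediately above. If anything, the one place to be slightly careful is to make sure the intersection form on $H^4(X)$ used to compute $p \cdot p$ is the same one underlying the trichotomy and the normal-bundle computation — but both refer to the intersection pairing on $H^4(X; \Z) = H_4(X; \Z)$, so there is no discrepancy. Thus the proof is simply: $3 = p^2 = p_1 \cdot p_2 \in \{-1,0,1\}$ is absurd unless $P_1 = P_2$.
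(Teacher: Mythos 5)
Your argument is exactly what the paper intends: the lemma is stated as an ``important consequence'' of the trichotomy for $p_1\cdot p_2$, and the proof is indeed the numerical incompatibility $3=p^2=p_1\cdot p_2\in\{-1,0,1\}$. You have simply spelled out the step the paper leaves implicit (marking the lemma $\done$), so the approach matches.
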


A configuration of planes in a smooth cubic $X \subset \Cp5$ is described by means
of its {\em graph of planes} $\Fn X$: the vertices of $\Fn X$ are planes $P \subset X$,
and two vertices $P_1$, $P_2$ are connected by a solid (respectively, dotted) edge
whenever $P_1$ and $P_2$ intersect at a point (respectively, in a line).
By $\ls|\Fn X|$, we denote the number of vertices of this graph, \ie, the number of planes in $X$.

{\3The next proposition gives
{\2us a}
precise relationship between classes in $M_X$ and planes in $X$.}

\begin{proposition}\label{planeclasses} Given a smooth cubic fourfold $X\subset \PP^5$,
the map $P \mapsto [P]$ establishes
a bijection between
$\Fn X$
and
the set of classes $p \in M_X$ such that $p^2=3$ and $h_X \cdot p=1$.
\end{proposition}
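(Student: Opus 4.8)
The plan is to prove the two halves of the bijection separately: injectivity of $P\mapsto[P]$ and the precise identification of its image.

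First, \emph{well-definedness and injectivity}. If $P\subset X$ is a plane with class $p=[P]\in M_X$, then the computations preceding \autoref{uniqueplanes} show $h_X\cdot p=1$ and $p^2=c_2(N_{P|X})=3$, so the image of the map indeed lands in the set $S:=\{p\in M_X\mid p^2=3,\ h_X\cdot p=1\}$. Injectivity is exactly \autoref{uniqueplanes}: each class in $M_X$ is represented by at most one plane. So the only real content is \emph{surjectivity onto $S$}, i.e.\ that every class $p\in M_X$ with $p^2=3$ and $h_X\cdot p=1$ is the class of an (automatically unique) plane.

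Second, \emph{surjectivity}. Given $p\in S$, I would consider the polarized sublattice $K$ generated by $h:=h_X$ and $p$ inside $M_X$: this is the rank-$2$ lattice $[3,1,3]=\Z h+\Z p$ (note $\det K=8\neq0$, so $p\notin\Z h$ and $K$ is genuinely rank $2$), and it is primitive in $H^4(X)$ because $M_X$ is. The idea is to apply the global Torelli / surjectivity package to recognise that a cubic polarized by such a $K$ must actually contain a plane with class $p$. Concretely: one checks that $K$ satisfies the four conditions of \autoref{polarized_lattices} — $K$ is positive definite, $h$ is characteristic in $K$ (using that $h$ is characteristic in $M_X\supset K$, or a direct check on $[3,1,3]$), $K$ admits a primitive embedding into $\bL$ with even orthogonal complement (here one uses that $K$ embeds in $M_X$, which does embed this way, and that $K^\perp_{\bL}$ being even can be arranged), and the two negativity conditions (3),(4) hold because $M_X$ already satisfies them and $K\subset M_X$. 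Hence there is a smooth cubic $X'$ with $M_{X'}\supset K$; by construction $X'$ contains some surface, but one wants a \emph{plane}. The cleanest route is the standard "constant cycle / Hassett divisor" argument: a cubic fourfold containing a class $p$ with $p^2=3$, $h\cdot p=1$ in its Hodge lattice is, by Hassett's analysis of the divisor $\mathcal C_8$, exactly one containing a plane, and the plane realising $p$ is unique by \autoref{uniqueplanes}. Transporting back along a marking identifying the two Hodge structures (legitimate by injectivity of the period map, \autoref{construction_maps}) shows that our original $X$ contains a plane with class $p$.

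The main obstacle is the passage "$p\in M_X$ with $p^2=3$, $h_X\cdot p=1$ $\Rightarrow$ $p$ is effective and represented by a plane", rather than merely by some algebraic $2$-cycle; the integral Hodge conjecture (\cite{Voisin:aspects}, cited in \autoref{preliminaries}) only gives that $p$ is a $\Z$-combination of classes of surfaces, and extracting an \emph{irreducible} surface — let alone a plane — from the numerical data $p^2=3$, $h\cdot p=1$ is the delicate point. I expect the argument to go through the classification of the relevant Hassett divisor $\mathcal C_8$ (equivalently, a direct geometric argument that a surface of degree $1$ spanning a $\Cp2$ and lying on a smooth cubic is a plane), so the real work is invoking Hassett's description correctly, together with condition \eqref{isotropic-like} of \autoref{polarized_lattices} to rule out the degenerate alternatives. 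Everything else — the lattice-theoretic verifications for $K=[3,1,3]$ and the marking-transport via Torelli — is routine.
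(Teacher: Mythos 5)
Your overall framework is close to the paper's — reduce to surjectivity, form $K:=\Z h_X+\Z p\cong[3,1,3]$, and invoke Hassett's irreducibility of the relevant Noether--Lefschetz divisor — but the central step is left unproved, and you flag this yourself. The assertion that ``a cubic fourfold containing a class $p$ with $p^2=3$, $h\cdot p=1$ in its Hodge lattice is, by Hassett's analysis of the divisor $\mathcal C_8$, exactly one containing a plane'' is essentially the statement you are trying to prove. Hassett gives irreducibility of the lattice-theoretically defined divisor; that a \emph{plane} with class exactly $p$ exists in such a cubic is an additional geometric input. The paper supplies it by three further moves inside a local universal family over a base $B$: a dimension count showing that the cubics containing a plane form an irreducible divisor in the parameter space, which together with irreducibility of the Hodge locus $B'$ of $p$ forces the two to agree; the observation that for very general $b\in B'$ one has $M_{\mathcal X_b}=K$ of rank $2$, so that the plane $P'\subset\mathcal X_b$ necessarily has $[P']=p$ because $p$ is the only class in $K$ with $p^2=3$, $h\cdot p=1$; and a specialization back to $X$, in which $p$ degenerates to an effective degree-$1$ cycle, hence a plane. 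None of these steps appears in your proposal.

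Two smaller but genuine slips. The claim that $K$ is primitive in $H^4(X)$ ``because $M_X$ is'' is a non sequitur: primitivity of $M_X$ in $H^4(X)$ says nothing about a rank-$2$ sublattice $K\subset M_X$. The correct argument, which the paper uses precisely to get irreducibility of the Hodge locus, is that the only proper finite-index extension of $[3,1,3]$ contains a vector $e$ with $e^2=e\cdot h=1$, excluded by \autoref{polarized_lattices}\iref{isotropic-like}. Also, the closing step ``transporting back along a marking identifying the two Hodge structures, legitimate by injectivity of the period map'' does not work as stated: the auxiliary cubic produced by \autoref{polarized_lattices} has $M_{X'}\cong K$ of rank $2$ and hence in general a \emph{different} period from $X$, so there is no marking identifying the two Hodge structures, and Torelli gives you nothing. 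The bridge from the very general member of the Hassett divisor to the particular $X$ is specialization within a family (using properness of the Fano variety of planes), not transport through a marking.
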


\begin{proof}
The
map $P\mapsto [P]$ is injective by
\autoref{uniqueplanes},
whereas the surjectivity is essentially stated
in the first paragraph of \cite[\S 3]{voisin}. {\3 {Here is a more direct argument}}.

Let $\pi\: \mathcal X\to B$ be a local universal family of marked cubic fourfolds, with  $X$ as the fiber over $b_0\in B$. The marking allows us to identify each lattice $M_{X_b}$, $b\in B$, with a sublattice of $\bL$. Let $p\in M_X\subset \bL$
be a class with $p^2=3$ and $h_X \cdot p=1$, and let $B'\subset B$ denote the
Hodge locus of $p$, parameterizing the fibers $\mathcal X_b$ for which the class
$p$ stays Hodge, \ie, $p\in M_{\mathcal X_b}$.
Note that
\[
\text{the sublattice $K:=\Z h+\Z p\simeq[3,1,3]\subset M_{\mathcal X_b}$ is necessarily primitive}
\label{eq.primitive}
\]
(as its only proper finite
index extension contains a vector as in
\autoref{polarized_lattices}\iref{isotropic-like});
hence,
the closed codimension $1$ subset $B' \subset B$ is irreducible
(see \cite[Theorem 3.2.3]{hassett}). On the other hand,
{\2a simple dimension count shows}
that the cubic fourfolds containing a plane form an irreducible divisor
in the {\3 parameter space of cubics}.

{\2Therefore,}
if $b\in B'$ is a very general point,
{\2then both}
\roster*
\item
the cubic $X'=\mathcal X_b$ contains a plane $P'$ and
\item
the {\2lattice} $M_{X'}$ has rank 2,
hence $M_{X'}=K$ by~\eqref{eq.primitive}.
\endroster
Since also
\roster*
\item
$p\in M_{X'}=K$ is the only class such that $h_{X'}\cdot p=1$ and $p^2=3$,
\endroster
we conclude that $[P']=p$.
Then, by specialization, in $X$ the class $p$ is also
represented by an effective cycle of degree 1 in $\PP^5$,
hence a plane $P\subset X$.\end{proof}

\begin{corollary}\label{planeclasses_real}
Given a real structure $c\: X \to X$ on a smooth cubic fourfold $X$,
a class $p\in M_X$
{\2is represented by}
a real plane $P\subset X$ if and only if
\begin{enumerate}
\item $h_X\cdot p=1$ and $p^2=3$,\label{item_complex}
\item $c^*p=p$.\label{item_real}
\end{enumerate}
\end{corollary}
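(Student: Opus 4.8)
The plan is to combine \autoref{planeclasses} with \autoref{construction_maps} applied to the anti-holomorphic isomorphism $c\: X \to X$. The statement we must prove is that, for a real structure $c$ on a smooth cubic fourfold $X$, a class $p \in M_X$ is represented by a \emph{real} plane $P \subset X$ if and only if conditions \iref{item_complex} and \iref{item_real} hold. By \autoref{planeclasses}, condition \iref{item_complex} — namely $h_X \cdot p = 1$ and $p^2 = 3$ — is exactly the condition for $p$ to be represented by some (a priori only complex) plane $P \subset X$, and this $P$ is unique. So the real content is that, given such a plane $P$, one has $c(P) = P$ if and only if $c^*p = p$, where $c^*$ denotes the action of $c$ on $H^4(X)$.

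First I would record that $c$ is an anti-holomorphic projective automorphism $X \to X$, so it is covered by \autoref{construction_maps} with $f = f^{-1} = c$ (here $X = Y$); in particular $c^*$ is an isometry of $H^4(X)$ fixing $h_X$. Next I would observe that $c$ maps planes to planes: since $c$ is anti-holomorphic and $X$ is a smooth cubic, $c(P)$ is again a (complex) linear subspace $\Cp2 \subset X$, and on classes one has $[c(P)] = c^*[P] = c^*p$ — more precisely, $c$ sends the effective cycle $P$ to the effective cycle $c(P)$, whose class is $c_*[P]$, and $c_* = (c^*)^{-1} = c^*$ on $H^4(X)$ since $c^*$ is an involutive isometry (Poincaré duality is $c$-equivariant up to the sign built into $\Cal P$, which does not affect the middle-dimensional cycle class here). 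Thus $c(P)$ is the unique plane representing $c^*p$.

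From this the equivalence is immediate: if $c^*p = p$, then $c(P)$ and $P$ are two planes with the same class $p$, so by \autoref{uniqueplanes} (or the bijectivity in \autoref{planeclasses}) we get $c(P) = P$, \ie, $P$ is real. Conversely, if $c(P) = P$, then $c^*p = [c(P)] = [P] = p$. Combining with the reformulation of \iref{item_complex} via \autoref{planeclasses} completes the proof.

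The main obstacle — really the only delicate point — is the clean identification $[c(P)] = c^*p$, which requires checking that $c$ does carry the plane $P$ to an honest plane (not merely some degree-one cycle) and that the induced map on $H^4$ is exactly $c^*$ with no sign ambiguity spoiling the cycle class. The first is geometric: $c$ is the restriction of an anti-holomorphic projective automorphism of $\Cp5$, and composing with coordinatewise conjugation turns it into a holomorphic projective automorphism, which sends the $2$-plane $\bar P$ to a $2$-plane; hence $c(P)$ is a $2$-plane in $X$. The second follows because $c^*$ is an involution on the unimodular lattice $H^4(X)$, so $c_* = (c^*)^{-1} = c^*$, and naturality of the cycle class map gives $[c(P)] = c_*[P] = c^*[P] = c^*p$. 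Once this is in place, everything else is a formal consequence of results already established.
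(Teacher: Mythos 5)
Your proof is correct and follows the same route as the paper's: invoke \autoref{planeclasses} to reduce to showing $c(P)=P \iff c^*p=p$ for the unique plane $P$ with $[P]=p$, then use uniqueness of the representing plane to close the loop. The paper's own proof is a two-line version of exactly this, treating the identification $[c(P)]=c^*p$ (which you carefully justify, including the orientation check since $\dim_\C P=2$ is even and $\dim_\C X=4$) as immediate; the only blemish in your write-up is the aside about ``the sign built into $\Cal P$'' — the period map is irrelevant here, and your cleaner argument via $c_*=(c^*)^{-1}=c^*$ is the one that actually does the work.
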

\begin{proof}
If \iref{item_complex} holds, then $p$ is represented by a unique plane $P$
by \autoref{planeclasses},
and $P$ satisfies $c(P)=P$ by \iref{item_real}.
The converse is immediate.
\end{proof}

\subsection{The Fermat cubic}\label{section:Fermat}
Let $X \subset \PP^5$ be the {\em Fermat cubic}, defined by the equation
\begin{equation}\label{Fcubic}x_0^3+x_1^3+\ldots+x_5^3=0.
\end{equation}
One can easily see that $X$ contains at least $405$ planes.
Indeed, consider one of the $5 \times 3$ splittings of the index set $\{0, \ldots, 5\}$
into three pairs, \eg, $\{0, 1\}, \{2, 3\}, \{4, 5\}$, and pick three cubic roots
$\xi_1$, $\xi_2$, $\xi_3$ of $-1$. Then, each of the planes
$$
x_0 = \xi_1 x_1, \quad x_2= \xi_2 x_3, \quad x_4 = \xi_3 x_5
$$
clearly lies in $X$.
The number of planes obtained in this way is $15 \times 3^3 = 405$.

A direct calculation (e.g., following the argument of Segre \cite[pp\PERIOD~122--123]{segre1944quartic}) shows that $X$ does not contain any other plane.
Alternatively, this statement is an immediate corollary of \autoref{th.main};
moreover, we assert that $X$ is the only (up to projective equivalence) smooth cubic
with $405$ planes.


\subsection{The Clebsch--Segre cubic}\label{section:HS}
The {\em Clebsch--Segre cubic} is the cubic fourfold $Y$
defined by the following equations in $\PP^6$:
\begin{equation}\label{CScubic}
x_0 + x_1 + \ldots +x_6 = x_0^3 + x_1^3 + \ldots + x_6^3 = 0.
\end{equation}Note that the full symmetric group $\SG7$ acts on $Y$ by permuting the coordinates.

According to K.~Hulek and M.~Sch\"{u}tt (private communication), $Y$
contains at least 357 planes, constituting
two $\SG7$-orbits. One orbit consists of the $105$ Fermat-type planes
$$
x_i + x_j = x_k + x_l = x_m + x_n = x_o = 0,
$$
where $i, j, k, l, m, n, o$ is a permutation of $0, 1, 2, 3, 4, 5, 6$.
To describe the other orbit, we fix an {\4ordered}
sequence of three pairwise
disjoint
{\4 couples}
in the index set, \eg, $(1, 2), (3, 4), (5, 6)$.
{\4(Other sequences, as well as all related objects described below, are
obtained by permuting the coordinates \via\ $\SG7$; we choose
to avoid cryptic multi-index constructs in the description.)}\mnote{Dg: to address the
referee's comment}
Consider a vector
$$(0, 1, -1, \Gf, -\Gf, 0, 0) \in \C^7,$$
where $\Gf^2 + \Gf - 1 = 0$,
and denote by $O$ its orbit under the dihedral group $\DG{10} \subset \SG7$
generated  by the simultaneous transposition $1 \leftrightarrow 2$, $3 \leftrightarrow 4$ and
the $5$-cycle
$$
0 \mapsto 1 \mapsto 3 \mapsto 4 \mapsto 2 \mapsto 0.
$$
It is straightforward to check that the image $[O] \subset \PP^6$ of $O$ consists of
five collinear points
and the plane spanned by $[O]$ and $[0 : 0 : 0 : 0 : 0 : 1 : -1]$ lies in $Y$.
This plane is stabilized by $\DG{10} \times \<5 \leftrightarrow 6\>$; hence, its $\SG7$-orbit
is of size 252, resulting in the total of 357 planes in $Y$.

By a direct computation,
or as a consequence of
\autoref{lemma.HS}, we conclude that the cubic $Y$
given by~\eqref{CScubic} contains no other planes.
Observe also that, immediately by the construction, all $357$ planes in $Y$ are real
{\2(as they are spanned by real points).}

\subsection{The \name-cubic}\label{section:351}
The cubic fourfold $Y\subset \PP^5$
{\4with 351 planes} is given by the equation
\begin{equation}\label{equationform}
g(x_0,x_1,x_2)=g(x_3,x_4,x_5),\quad
g(t_0,t_1,t_2):=t_0t_1^2-t_2^3-t_0^2t_2;
\end{equation}
this cubic appears in the
recent paper of K. Koike \cite{koike}.
To
{\4describe}
the planes in~{\4$Y$},
{\4observe} that
{\4$g=0$ defines smooth cubic curves $C_1$ in the
$[x_0:x_1:x_2]$-plane~$P_1$ and $C_2$ in the $[x_3:x_4:x_5]$-plane~$P_2$.}\mnote{Dg:
I tried
to avoid the $V$-notation used in two different meanings, but I do not
insist. Formally, in the old notation,
I think it should've been $C_1\subset P_2$}
For each pair of
flex
{\4tangents $L_1\subset P_1$ and $L_2\subset P_2$,}
the span of $L_1$
and $L_2$
{\4is a space $\PP^3\subset\PP^5$}, and
{\4the intersection} $\PP^3\cap  Y$ is a union of three planes.
Since each cubic curve has 9 inflection points, this gives
{\4us} $3\times 9\times9=243$ planes,
{\4which are all pairwise distinct.}

The reason for the choice of $g$ is that
{\4$C_1\cong C_2$}
have many
automorphisms:
Koike shows that $Y$ contains $108$ additional
planes, one for each automorphism of $C_1$ (see \cite[Lemma 2.2]{koike}).
This
{\4results in} a total of $243+108=\4 351$ planes in $Y$.\mnote{\4Dg: here
and elsewhere: haven't we decided on "planes {\bf in} a fourfold"?}

\remark
Note
that the Fermat cubic fourfold, {\4considered in \autoref{section:Fermat}},
{\4 can also be represented by} \eqref{equationform},
with $g(t_0,t_1,t_2)=t_0^3+t_1^3+t_2^3$, {\4so that the respective cubic
$C_1$ has even more automorphisms.}
\endremark

%
%
%
%

\section{Reduction to the Niemeier lattices}\label{S.Neimeier}

The principal goal of this section is replanting
a lattice $M$ as
in \autoref{polarized_lattices}
to an appropriate Niemeier lattice. Then, in \autoref{s.orbits} and
\autoref{s.idea}, we outline the strategy of our proof of
Theorems~\ref{th.main} and~\ref{th.main_real}.

\subsection{Replanting to a Niemeier lattice}\label{s.Neimeier_replanting}
Consider a positive definite $3$-polarized lattice $M \ni h$.
Assume that $h$ is characteristic in $M$
and $M$ contains at least one $h$-\emph{plane}, \ie,
a vector $l$ such that $l^2 = 3$ and $l \cdot h = 1$.

We denote by $S := S(M)$ the index $3$ extension of the lattice
$h^\perp\oplus \Z\hbar$, $\hbar^2 = 12$, by the vector
$\frac{1}{3}(3l-h+\hbar)$;
this extension does not depend on the choice of an $h$-plane $l$. A
\emph{plane} in $S$ is a vector $l \in S$ such that
$l^2=l\cdot\hbar=4$.

The following statement is immediate.

\proposition\label{prop.S}
The lattice $S$ constructed above has the following properties\rom:
\roster
\item\label{S.even}
$S$ is even and positive definite\rom;
\item\label{S.4}
$\hbar \in 4S\dual$\rom;
\item\label{S.exeptional}
the orthogonal complements $h^\perp \subset M$
and $\hbar^\perp \subset S$ are identical\rom; hence, in particular,
there is a canonical bijection between their sets of roots.
\endroster
Furthermore, the map $x \mapsto \frac{1}{3}(3x - h + \hbar)$
establishes bijections between
\roster[\lastitem]
\item\label{S.planes}
$h$-planes in $M$ and planes in $S$\rom;
\item\label{S.eh=4}
vectors as in \autoref{polarized_lattices}\iref{isotropic-like}
and $e \in S$
such that $e^2 = 2$, $e \cdot \hbar = 4$.
\done
\endroster
\endproposition

\remark\label{rem.roots}
In view of item~\iref{S.4} in \autoref{prop.S}, for any root $e\in S$, we have
$e\cdot\hbar\in\{0,\pm4\}$; hence, either $e\in\hbar^\perp$ or $\pm e$ is as
in item~\iref{S.eh=4}.
We are mainly interested in lattices $S$ not containing either of these two classes of vectors;
it follows that this condition is equivalent to the requirement that $S$ should be root free.
\endremark

\proposition\label{prop.replanting}
If $M$ admits a primitive embedding to $\bL$ with even orthogonal complement,
then $S$ admits an embedding to a Niemeier lattice $N$ such that
the torsion of $N/S$ is a $2$-group.
\endproposition

\proof The proof relies upon Nikulin's theory of discriminant forms (see \cite{Nikulin:forms}).
Let $\rho = \rank M$ and $T = M^\perp \subset \bL$.
Then,
since $h^\perp \subset M$ is the orthogonal complement of $\Z h \oplus T$ in the unimodular lattice $\bL$,
we have
$$\discr h^\perp \simeq
\<\eta\>
\oplus \discr(-T),
$$
where $3\eta = 0$ and $\eta^2 = \tfrac{2}{3} \bmod 2\Z$;
we use the assumption that $M$ contains an $h$-plane, so that $h \notin 3M\dual$
and, hence, the sublattice $\Z h \oplus T$ is primitive in $\bL$.
The passage from $h^\perp$ to $S$ changes the discriminant to
$$\discr S = \bigl\<\tfrac{1}{4}\hbar\bigr\> \oplus \discr(-T), \quad
\bigl(\tfrac{1}{4}\hbar\bigr)^2 = \tfrac{3}{4} \bmod 2\Z,$$
which almost satisfies the hypotheses of
{\2\cite[Theorem 1.12.2]{Nikulin:forms}.}
The only difficulty is in item (4) of
{\2\loccit.:}
if $\ell(\discr_2 T) = \rank T$
and $\discr_2 T$ is even, then $\discr_2 S$ is also even and has a wrong determinant
$$\pm 3 \ls|\discr S| \bmod (\Q_2^\times)^2$$
instead of $\pm \ls|\discr S| \bmod (\Q_2^\times)^2$.
However, since $\ell(\discr_2 S) = 24 - \rho \geq 3$, we may pass to an appropriate iterated index $2$ extension
and either make $\discr S$ odd or reduce its length.
{\4Indeed,\mnote{Dg: for the referee}
the classification of discriminant $2$-forms (see, \eg,
\cite{Nikulin:forms}) implies that, with the exception of $\pm3\bigl[\frac12\bigr]$
and a few forms of length $\ell\le2$, any such form contains an isotropic
element and, hence, can be reduced to a smaller one.}
\endproof

The construction above is invertible: starting from a pair $S \ni \hbar$, where $S$ is a positive definite even lattice
and $\hbar^2 = 12$, and assuming that $\hbar \in 4S\dual$,
one can construct a unique $3$-polarized lattice $M \ni h$ such that $S = S(M)$.
However, the converse of \autoref{prop.replanting} does not hold,
and we state it as an extra restriction.

\proposition\label{Niemeier_back}
Let $S$ be a positive definite even lattice, and let $\hbar \in S \cap 4S\dual$ be a vector of square $12$.
Then, the $3$-polarized lattice $M \ni h$ obtained from $S \ni \hbar$ by the inverse construction
admits a primitive embedding to $\bL$ with even orthogonal complement
if and only if\rom:
\roster
\item\label{cond1} $\rank S \leq 21$\rom; we denote $\delta := 23 - \rank S \geq 2$, and
\endroster
the discriminant $\cS := \discr S$ has the following properties at each prime $p$\rom:
\roster[\lastitem]
\item\label{cond2}
if $p > 2$, then either $\ell(\cS_p) < \delta$ or $\ell(\cS_p) = \delta$ and $\det \cS_p = \ls|\cS| \bmod (\Q_p^\times)^2$\rom;
\item\label{cond3}
if $p = 2$, then $\ell(\cS_2) \leq \delta + 1$ and, in the case of equality $\ell(\cS_2) = \delta + 1$, either $\cS$ is odd or
$\det \cS_2 = \pm 3 \ls|\cS| \bmod (\Q_2^\times)^2$.
\endroster
\endproposition

\proof
We use
{\2\cite[Theorem 1.12.2]{Nikulin:forms}.}
Under the assumption that $\hbar \in 4S\dual$,
there is a splitting
$\cS = \bigl\<\frac{1}{4}\hbar\bigr\> \oplus {\mathcal T}$, and we merely
restate the restrictions
on ${\mathcal T} \cong \discr M$ in terms of $\cS$.
\endproof

\subsection{Admissible and geometric sets}\label{s.admissible_sets}
In view of Propositions \ref{prop.S} and \ref{prop.replanting},
we can replace the lattice $M_X$ of a smooth cubic $X \subset \Cp5$
by the corresponding lattice $S(M_X)$ and construct the latter
directly in an appropriate Niemeier lattice,
as the span of its set of planes.

Thus, we fix a Niemeier lattice $N$ and a vector $\hbar \in N$, $\hbar^2 = 12$,
and consider the set of \emph{planes}
$$
\fF(\hbar) = \bigl\{l \in N \bigm | l^2 = 4,\ l \cdot \hbar = 4\bigr\}.
$$
For any subset $\fL \subset \fF(\hbar)$, we define its \emph{span}
$$
\spn_2 \fL = (\Z_2 \fL + \Z_2 \hbar) \cap N \subset{\2N}
$$
{\2(where the intersection is in $N \otimes \Z_2$).}
The \emph{rank} of $\fL$ is $\rank \fL := \rank \spn_2 \fL$,
and we say that $\fL$ is \emph{generated} by a subset $\fL' \subset \fL$
if $\fL = \fF(\hbar) \cap \spn_2 \fL'$.

By definition, the torsion of $N/\spn_2 \fL$ is a $2$-group
and $\hbar \in 4(\spn_2 \fL)\dual$. A finite index extension $S \supset \spn_2 \fL$ in $N$
is called \emph{mild} if $\hbar \in 4S\dual$ and $S$ is root free, \cf.
\autoref{rem.roots}.

A subset $\fL \subset \fF(\hbar)$ is called \emph{complete}
if $\fL = \fF(\hbar) \cap \spn_2 \fL$; it is called \emph{saturated}
if the identity $\fL = \fF(\hbar) \cap S$ holds for any mild extension $S$ of $\spn_2 \fL$.

\definition\label{def:admissible}
A subset $\fL \subset \fF(\hbar)$ is called \emph{admissible}
if $\spn_2 \fL$
is root free, \cf. \autoref{rem.roots}.
A complete admissible subset $\fL$ is \emph{pseudo-geometric} if
$S = \spn_2 \fL$ satisfies conditions \iref{cond1} and \iref{cond2} in \autoref{Niemeier_back};
it is called \emph{geometric} if $\spn_2 \fL$ admits a mild extension $S$
satisfying all hypotheses of \autoref{Niemeier_back}
and such that $\fL=\fF(\hbar)\cap S$.
\enddefinition

Since the lattice $N$ is positive definite, we have $-1 \leq l_1 \cdot l_2 \leq 3$
for any two distinct planes $l_1$ and $l_2$.
If $\fL \subset \fF(\hbar)$ is admissible, then
\[
\mbox{$l_1 \cdot l_2 \in \{0, 1, 2\}$ for any distinct planes $l_1, l_2 \in \fL$.}
\label{eq.weaker_property}
\]
(Indeed, if $l_1 \cdot l_2 = 3$ or $-1$, then, respectively, $e:=l_1-l_2$ or
$\hbar-l_1-l_2$ is a root.)
Thus, we can regard $\fL$ as a graph, connecting two vertices $l_1 \ne l_2$
by an edge of multiplicity $l_1 \cdot l_2 - 1$ (\cf. the description of the
graph of planes
in \autoref{section.planes}).

Two $12$-polarized Niemeier lattices $N_i\ni\hbar_i$, $i=1,2$,
are called \emph{equivalent} if the corresponding polarized lattices
$\spn_2\fF(\hbar_i)\ni\hbar_i$ are isomorphic
and both torsions $\Tors(N_i/\fF(\hbar_i))$ are $2$-groups.
Clearly,
equivalent polarized lattices share the same collections of admissible and
pseudo-geometric sets of planes. \latin{A priori}, this is not true for
geometric sets.

\subsection{Orbits, counts, and bounds}\label{s.orbits}
Let $\bN\ni\hbar$ be a $12$-polarized Niemeier lattice
as in \autoref{s.admissible_sets}, and let
$\OG(\bN)\supset\RG(\bN)$ be the full orthogonal group of~$\bN$ and its
subgroup generated by reflections, respectively. We denote by
$\OG_\hbar(\bN)\supset\RG_\hbar(\bN)$ the stabilizers of~$\hbar$ in these two
groups. The stabilizers act on $\fF(\hbar)$ and, hence, $\fF(\hbar)$ splits into
$\OG_\hbar(\bN)$-\emph{orbits} $\borb_n$, each orbit splitting into
$\RG_\hbar(\bN)$-orbits $\orb\subset\borb_n$, called \emph{combinatorial
orbits}. The number of combinatorial orbits in an orbit $\borb_n$ is
denoted by~$m(\borb_n)$, and the set of all combinatorial orbits is denoted by
$\Orb:=\Orb(\hbar)$. This set inherits a natural action of the group
\[*
\stab\hbar:=\OG_\hbar(\bN)/\!\RG_\hbar(\bN),
\]
which preserves each orbit~$\borb_n$.
(By an obvious abuse of notation, occasionally $\borb_n$ is treated as
a subset of $\Orb$, whereas subsets of $\Orb$ are treated
as sets of planes.)

For a subset $\Cluster\subset\Orb$, let
\[*
\Bset(\Cluster):=\bigl\{\fL\cap\Cluster\bigm|
 \mbox{$\fL\subset\fF(\hbar)$ is pseudo-geometric}\bigr\},\quad
\bset(\Cluster):=\bigl\{\ls|\fL|\bigm|\fL\in\Bset(\Cluster)\bigr\}.
\]
Since the pseudo-geometric property is obviously inherited by complete subsets,
in the computation we can confine ourselves to the
pseudo-geometric sets $\fL\subset\fF(\hbar)$ generated by $\fL\cap\Cluster$.

Following~\cite{degt:sextics}, define the \emph{count} $\cnt(\orb)$ and
\emph{bound} $\bnd(\orb)$
of a single combinatorial orbit $\orb\in\Orb$ \via
\[*
\cnt(\orb):=\ls|\orb|,\qquad
\bnd(\orb):=\max\bset(\orb).
\]
Usually, the bound $\bnd(\orb)$ is replaced with its rough estimate computed as
explained in \autoref{s.blocks} below.
Clearly, $\cnt$ and $\bnd$ are constant within each orbit~$\borb_n$.
We extend these notions to subsets $\Cluster\subset\Orb$
\emph{by additivity}:
\[*
\cnt(\Cluster):=\sum_{\orb\in\Cluster}\cnt(\orb),
\qquad
\bnd(\Cluster):=\sum_{\orb\in\Cluster}\bnd(\orb).
\]
Thus, we have a na\"{\i}ve \latin{a priori} bound
\[
\ls|\fL|\le\bnd(\Orb)=\sum m(\borb_n)\bnd(\orb),\quad\orb\subset\borb_n.
\label{eq.naive}
\]
Clearly, the true count $\ls|\fL\cap\Cluster|$ is genuinely additive, whereas
the true sharp bound
$\max\bset(\Cluster)\le\bnd(\Cluster)$
is only subadditive; thus, our
proof of \autoref{th.main} will essentially consist in
reducing~\eqref{eq.naive} down to a certain preset goal~$\goal$.
To this end, we will
consider the set\mnote{It: $\fF(\hbar)$}
\[*
\Bnd=\Bnd(\fF(\hbar)):=
 \bigl\{\fL\subset\fF(\hbar)\bigm|\mbox{$\fL$ is geometric}\}/\!\OG_\hbar(\bN)
\]
and,
for a collection of orbits $\Cluster=\borb_1\cup\ldots$ and
integer $d\in\NN$, let
\[*
\Bnd_d(\Cluster):=\bigl\{[\fL]\in\Bnd\bigm|
 \mbox{$\fL$ is generated by $\fL\cap\Cluster$ and
 $\ls|\fL\cap\Cluster|\ge\bnd(\Cluster)-d$}\bigr\}.
\]
We will also consider the oversets
$\pBnd_d(\Cluster)\supset\Bnd_d(\Cluster)$ consisting of
pseudo-geometric (rather than geometric) sets.
The computation of these sets is discussed in \autoref{s.Bset}.

\subsection{Idea of the proof}\label{s.idea}
Fix
a $12$-polarized Niemeier lattice $\bN\ni\hbar$ and a goal\mnote{It: point added}
\[
\ls|\fL|\ge\goal\quad\mbox{(typically, $\goal = 351$)}.
\label{eq.goal}
\]
We need to list all geometric sets $\fL\subset\fF(\hbar)$ satisfying this
inequality. Clearly, such sets may exist only if $\bnd(\Orb)\ge\goal$,
where $\bnd(\Orb)$ is the na\"ive bound given by~\eqref{eq.naive}; otherwise,
the pair $\bN\ni\hbar$ does not need to be considered.

In the few remaining cases, we use brute force and, for each combinatorial
orbit~$\orb$, compute the $\RG_\hbar(\bN)$-orbits on
the set $\Bset(\orb)$.
(Obviously, it suffices to consider one representative of each
orbit $\borb_n$; the rest is obtained by translation.) In particular, we
obtain sharp bounds $\bnd(\orb)$ and sets of values
\[
\bset(\orb)=\{\bnd(\orb)>\bnd'(\orb)>\ldots\}.
\label{eq.values}
\]
This may yield a better bound $\bnd(\Orb)$ given
by~\eqref{eq.naive}; this improved bound is used in the subsequent
computation.
If still $\bnd(\Orb)\ge\goal$,
we choose a collection of pairwise
disjoint unions of orbits $\Cluster_1,\ldots,\Cluster_m\subset\Orb$
and integers $d_1,\ldots,d_m\ge0$ such that
\[*
d_1+\ldots+d_m+m>\bnd(\Orb)-\goal.
\]
Then, clearly, any geometric set $\fL$ satisfying~\eqref{eq.goal} is in the
union
\[*
\CE:=\Bnd_{d_1}(\Cluster_1)\cup\cdots\cup\Bnd_{d_m}(\Cluster_m),
\]
and the same assertion holds for pseudo-geometric sets, with $\Bnd_d$ replaced
with $\pBnd_d$. We try to fix the choices so that the union~$\CE$ consists of
relatively few sufficiently large sets; then,
these exceptional sets are analyzed
one by one using one of the
{\2arguments described below.}

\subsubsection{Maximal sets\pdfstr{}{
 {\rm(see~\cite{degt:sextics})}}}\label{ss.max.set}
If a set $\fL\in\CE$ is saturated and the rank $\rank\fL=21$ is maximal,
\cf. \autoref{Niemeier_back}\iref{cond1},
then $\fL$ has no proper geometric extension; hence, this set can be either
disregarded, if $\ls|\fL|<\goal$, or listed as an exception in the respective
statement.

\subsubsection{Extension by a maximal orbit\pdfstr{}{
 {\rm(see~\cite{degt:sextics})}}}\label{ss.max.ext}
In many cases, a set $\fL\in\Bnd_d(\Cluster)$ has the property that
\[*
\sum\bigl(\bnd(\orb)-\bnd'(\orb)\bigr)
 \ge\bnd(\Orb)-\goal,\qquad
 \orb\in\Orb_\Gd:=\bigl\{\orb\in\Orb\bigm|\ls|\fL\cap\orb|<\bnd(\orb)\bigr\},
\]
see~\eqref{eq.values}.
This implies that any (pseudo-)geometric extension $\fL'\supset\fL$
satisfying~\eqref{eq.goal} must have maximal intersection,
$\ls|\fL'\cap\orb|=\bnd(\orb)$, with at least one orbit $\orb\in\Orb_\Gd$.
Trying these orbits one by one, we obtain larger sets, which are usually maximal,
see \autoref{ss.max.set}.
Note that here we always assume $\fL\cap\Cluster$ fixed, \ie, we accept only those
extensions $\fL'\supset\fL$ that have the property
\[
\mbox{$\fL'\supset\fL$ is pseudo-geometric and $\fL'\cap\Cluster=\fL\cap\Cluster$};
\label{eq.fixed.ext}
\]
indeed, otherwise, we would have started with a larger set
$\fL''\in\Bnd_d(\Cluster)$, \viz. the one generated by $\fL'\cap\Cluster$.
Thus, for the computation,
we merely extend the restricted pattern
\[*
\pat_\fL\:\Cluster\to\NN,\quad\orb\mapsto\ls|\fL\cap\orb|
\]
(see \autoref{s.patterns} below),
by a single extra value $\orb'\mapsto\bnd(\orb')$ for some orbit
$\orb'\in\Orb_\Gd\sminus\Cluster$ and perform one extra step of the algorithm.
We make use of the symmetry of~$\fL$, trying for~$\orb'$ a single
representative of each orbit of the action on $\Orb_\Gd\sminus\Cluster$ of the
$(\stab\hbar)$-stabilizer of $\pat_\fL$.

\subsubsection{Maximal orbit count}\label{ss.max.orbit}
For smaller sets $\fL\in\Bnd_d(\Cluster)$, we choose an appropriate \emph{test set}
$\fT\subset\Orb\sminus\Cluster$ (typically, also a union of orbits) and use
the same techniques as in \autoref{ss.max.ext} to
compute the set
\[*
\fT_\mu:=\bigl\{\orb\in\fT\bigm|
 \mbox{$\fL'\cap\orb=\bnd(\orb)$ for some
 extension $\fL'\supset\fL$ satisfying~\eqref{eq.fixed.ext}}\bigr\}.
\]
Then, if
\[*
\bigl(\bnd(\Cluster)-\ls|\fL\cap\Cluster|\bigr)
 +\sum\bigl(\bnd(\orb)-\bnd'(\orb)\bigr)\ge\bnd(\Orb)-\goal,\qquad
 \orb\in\fT\sminus\fT_\mu,
\]
see~\eqref{eq.values},
we conclude that $\fL$ has no extensions
satisfying~\eqref{eq.goal} and~\eqref{eq.fixed.ext}.

\section{Algorithms}\label{algorithms}

In the first four sections, we describe a rough estimate on the bounds
$\bnd(\orb)$ in the Niemeier lattices with many roots (and, hence, large
combinatorial orbits). Then, in \autoref{s.Bset}, we explain the algorithms used to
compute the sets $\Bnd_d(\Cluster)$ in those few cases where the rough
estimates do not suffice.

\subsection{Bounds \via\ blocks\pdfstr{}{
 {\rm(see~\cite{degt:sextics})}}}\label{s.blocks}
Let $\bN:=\N(\bR)=\N(\bigoplus_k\bR_k)$ be a Niemeier lattice
rationally generated by roots, where the \emph{blocks} $D_k$, $k\in\Omega$,
are the irreducible
components of the \emph{maximal} root system $\bR\subset\bN$ and $\Omega$ is the
index set.
Thus, we have $\bN\subset\bR\dual=\bigoplus_k\bR_k\dual$; the vectors in
$\discr\bR=\bR\dual\!/\bR=\bigoplus_k\discr\bR_k$ that are declared
``integral'' are described in~\cite{Conway.Sloane}.
(We also use the convention of~\cite{Conway.Sloane} for the numbering of the
discriminant classes of irreducible root systems.)

Let $|_k\:\bN\to\bR_k\dual$ be the orthogonal projection. For a vector
$v\in\bN$, we often abbreviate $v_k:=v|_k$, so that $v=\sum_kv_k$,
$v_k\in\bR_k\dual$. Define the \emph{support}
\[*
\supp v:=\bigl\{k\in\Omega\bigm|v_k\ne0\bigl\}.
\]
The group $\RG_\hbar(\bN)$ preserves each block~$\bR_k$ and, hence, we can
also speak
about the \emph{support} $\supp\orb\subset\Omega$ of a combinatorial
orbit~$\orb$. (It is worth mentioning that, for each $k\in\Omega$,
the squares $l^2,l_k^2\in\Q$,
products
$l\cdot\hbar,l_k\cdot\hbar_k\in\Q$, and discriminant classes
$l\bmod\bR\in\discr\bR$ and $l_k\bmod\bR_k\in\discr\bR_k$,
$l\in\orb$, are also constant within each combinatorial orbit~$\orb$.)

Fix a combinatorial orbit~$\orb$ and define the \emph{count} and \emph{bound}
of a block~$\bR_k$ \via
\[*
\cnt(\bR_k):=\ls|\orb|_k|,\qquad \bnd(\bR_k):=\max\ls|\frak{R}|,
\]
where $\frak{R}\subset\orb|_k$ is a subset satisfying the condition
\[
\text{for $l',l''\in\frak{R}$, one has $l^{\prime2}-l'\cdot l''=0$ (iff $l'=l''$),
 $2$, $3$, or $4$}.
\label{eq.intersection.block}
\]
In other words, we bound the cardinality of subsets $\fL\subset\orb$
satisfying~\eqref{eq.weaker_property} and such that all planes $l\in\fL$ have the
same fixed restriction to all other blocks $\bR_s\ne\bR_k$.
Then, we have (\cf. \cite{degt:sextics})
\[
\cnt(\orb)=\prod_k\cnt(\bR_k),\qquad
\bnd(\orb)\le\cnt(\orb)\min_k\frac{\bnd(\bR_k)}{\cnt(\bR_k)}.
\label{eq.bound}
\]
For smaller blocks ($\bA_{\le7}$, $\bD_{\le6}$, and most $\bE$-type blocks)
the individual counts $\cnt(\bR_k)$ and bounds $\bnd(\bR_k)$ are computed by
brute force, and the resulting estimates~\eqref{eq.bound} suite most our
needs. For larger blocks, we use even more rough estimates, based on the
standard representation of the $\bA$- and $\bD$-type root systems as
sublattices of the odd unimodular lattice
\[*
\bH_{n}:=\bigoplus\Z\e_i,
\quad e^2_i=1,
\quad i\in\IS:=\{1,\ldots,n\}.
\]
(When working with this lattice, we let $\vv{o}:=\sum_{i\in o}\e_i$
for a subset $o\subset\IS$.)
Then, given a vector $\hbar_k=\sum_i\Ga_ie_i\in\bH_n\otimes\Q$,
we subdivide the block $\bR_k\dual\subset\bH_n\otimes\Q$ into ``subblocks''
\[*
\textstyle
\bR_k(\Ga):=\bigl\{\sum_i\Gb_ie_i\in\bH_n\otimes\Q\bigm|i\in\supp(\Ga)\bigr\},
\quad\supp(\Ga):=\bigl\{i\in\IS\bigm|\Ga_i=\Ga\bigr\},
\]
on which $\hbar_k$ is constant.
We obtain combinatorial counts and bounds, in the sense of~\eqref{eq.intersection.block},
for each subblock and use an obvious analogue of~\eqref{eq.bound} to
estimate $\bnd(\bR_k)$. The technical details are outlined in the
next few sections. We use, without further references, the following simple
observation.

\lemma\label{lem.shortest}
If $l\in\fF(\hbar)$ and $l\bmod\bR\ne0\in\discr\bR$,
then each $l_k$, $k\in\Omega$, is a shortest vector in its discriminant class
$l_k\bmod\bR_k\in\discr\bR_k$.
\endlemma

\proof
Indeed, otherwise the \emph{nontrivial} discriminant class $l\bmod\bR$ would
contain a shorter vector~$v$, necessarily of square $l^2-2=2$, contradicting
to the assumption that all roots in~$\N$ are in~$\bR$.
\endproof

\subsection{Root systems $\bA\sb{n}$}\label{s.An}
A block~$\bR_k$ of type $\bA_n$ is $\vv{\IS}^\perp\subset\bH_{n+1}$:
\[*
\textstyle
\bA_n=\bigl\{\sum_i\Ga_i\e_i\in\bH_{n+1}\bigm|\sum_i\Ga_i=0\bigr\}.
\]
One has $\discr\bA_n=\Z/(n+1)$, with a generator of square $n/(n+1)\bmod2\Z$,
and the shortest representatives of the
discriminant classes are vectors of the form
\[*
\be_{o}:=\dfrac1{n+1}
 \bigl(\ls|\bar o|\vv{o}-\ls|o|\vv{\bar o}\bigr),\qquad
 \be_{o}^2=\dfrac{\ls|o|\ls|\bar o|}{n+1},
\]
where $o\subset\IS$ and $\bar o$ is the complement. We have
$\be_{\bar o}=-\be_o$ and
\[*
\be_{r}\cdot\be_{s}=\ls|r\cap s|-\frac{\ls|r|\ls|s|}{n+1}.
\]
In particular, if $\ls|r|=\ls|s|$, or, equivalently,
$\e_r$ and $\e_s$ are in the same
discriminant class, then
\[
\be_{r}^2-\be_{r}\cdot\be_{s}=\frac12\ls|r\sdif s|,
\label{eq.A.intr}
\]
where $\sdif$ is the symmetric difference. Hence, in the case when $l_k$ is a
shortest vector in its (nonzero) discriminant class, the bound
$\bnd(\bR_k(\Ga))$
can be estimated by the following lemma, applied to $\Sigma=\supp(\Ga)$.

\lemma\label{lem.max.set.A}
Consider a finite set~$\Sigma$,\mnote{DgIt: $\Sigma$ instead of $S$}
$\ls|\Sigma|=n$, and let $\fS$ be a collection of
subsets $s \subset \Sigma$ with the following properties\rom:
\roster
\item\label{sets.m}
all subsets $s\in\fS$ have the same fixed cardinality~$m$\rom;
\item\label{sets.dif}
if $r,s\in\fS$, then $\ls|r\sdif s|\in\{0, 4, 6, 8\}$.
\endroster
Then, for small $(n,m)$, the maximum $\CA_{m,n}:=\max\ls|\fS|$
is as follows\rom:
\[*
\minitab\
(n,m):&    (n,1)&(n,2)             &(6,3)&(7,3)&(8,3)&(9,3)&(8,4)\cr
\CA_{m,n}:&    1&\lfloor n/2\rfloor&    4&    7&    8&   12&   14\cr
\endminitab
\]
More generally,
\[*
\CA_{3,n}\le\left\lfloor\frac{n}3\Bigl\lfloor\frac{n-1}2\Bigr\rfloor\right\rfloor;\qquad
\CA_{m,n}\le\left\lfloor\frac1m\binom{n}{m - 1}\right\rfloor\quad\mbox{for $m\ge1$}.
\]
\endlemma

Note that, if a collection~$\fS$ is as in the lemma, then so
is the collection $\{\bar s\,|\,s\in\fS\}$.
Hence, $\CA_{m,n}=\CA_{n-m,n}$ and we can always assume that $2m\le n$.

\proof[Proof of \autoref{lem.max.set.A}]
The first two values are obvious;
the others are obtained by listing all admissible
collections.

The general estimate for $m=3$ follows from the observation that any two
subsets in~$\fS$ have at most one common point and, hence, each point of~$\Sigma$ is
contained in at most $\bigl\lfloor\frac12(n-1)\bigr\rfloor$ such subsets.

For the last bound, we merely observe that each $(m-1)$-element set
$r \subset \Sigma$ is contained in at most one set $s\in\fS$.
\endproof

There remains to consider a subblock $\bR_k(\Ga)$ of a block~$\bR_k$
containing vectors of the form $l_k=\vv{r}-\vv{s}$, where
$r,s\subset\IS$, $r\cap s=\varnothing$, and $\ls|r|=\ls|s|=1$ or~$2$.
In the latter case, one must have $l_k\cdot\hbar_k=4$, and it follows that
$\ls|(r\cup s)\cap\supp(\Ga)|\le3$ for each $\Ga\in\Q$.
Letting $r_\Ga:=r\cap\supp(\Ga)$, $s_\Ga:=s\cap\supp(\Ga)$,
the bounds are as follows:
\roster
\item\label{bound.An.1}
if $\ls|r_\Ga|+\ls|s_\Ga|=1$, then, obviously, $\bnd(\bR_k(\Ga))=1$;
\item\label{bound.An.2}
if $(\ls|r_\Ga|,\ls|s_\Ga|)=(2,0)$ or $(0,2)$, then
distinct sets $r_\Ga$ (respectively, $s_\Ga$)
must be
pairwise disjoint and, hence,
$\bnd(\bR_k(\Ga))=\bigl\lfloor\frac12\ls|\supp(\Ga)|\bigr\rfloor$;
\item\label{bound.An.roots}
if $\ls|r_\Ga|=\ls|s_\Ga|=1$,
then distinct sets $r_\Ga$ must also be pairwise disjoint and, hence,
$\bnd(\bR_k(\Ga))=\ls|\supp(\Ga)|$;
\item\label{bound.An.three}
if $(\ls|r_\Ga|,\ls|s_\Ga|)=(2,1)$ or $(1,2)$, then
$\bnd(\bR_k(\Ga))=\ls|\supp(\Ga)|=3$.
\endroster

\subsection{Root systems $\bD\sb{n}$}\label{s.Dn}
A block~$\bR_k$ of type~$\bD_n$
can be defined as the maximal even sublattice in~$\bH_n$:
\[*
\textstyle
\bD_n=\bigl\{\sum_i\Ga_i\e_i\in\bH_{n}\bigm|\sum_i\Ga_i=0\bmod2\bigr\}.
\label{eq.Dn}
\]
If $n\ge5$, the group $\OG(\bD_n)$ is an index~$2$ extension of~$\RG(\bD_n)$:
it is
generated by the reflection against the hyperplane orthogonal to any
of~$\e_i$. Hence, up to $\OG(\bD_n)$, we can assume that, in the expression
$\hbar_k=\sum_i\Ga_i\e_i$, all coefficients $\Ga_i\ge0$. We always make this
assumption (and adjust the results afterwards) when describing the orbits and
computing counts and bounds, as otherwise the description of combinatorial
orbits is not quite combinatorial.

One has $\discr\bD_n=\Z/2\oplus\Z/2$ (if $n$ is even) or $\Z/4$ (if $n$ is
odd); the shortest vectors are
\[*
\e_i,\ i\in\IS,\quad\text{and}\quad
 \be_o:=\frac12(\vv{o}-\vv{\bar o}),\
 o\subset\IS,\quad
 \be_o^2=\frac{n}4
\]
(the class $\be_o\bmod\bD_n$ depends on the parity of~$\ls|o|$)
and we have a literal analogue of~\eqref{eq.A.intr} for any pair
$r,s\subset\IS$.
Thus, if $\bR_k\ni\be_o$,
the bounds $\bnd(\bR_k(\Ga))$ are estimated by
\autoref{lem.max.set.A} (if $\Ga\ne0$) or
\autoref{lem.max.set.D} below (if $\Ga=0$), applied to $\Sigma=\supp(\Ga)$.

\lemma\label{lem.max.set.D}
The maximal cardinality
of a collection~$\fS$ satisfying condition~\iref{sets.dif}
of \autoref{lem.max.set.A}
is bounded \via\
\[*
\ls|\fS|\le\max_{m\ge0}
 \bigl(\CA_{m,n}+\CA_{m+2,n}+\CA_{m+4,n}+\CA_{m+6,n}+\CA_{m+8,n}\bigr),
\]
where
$\CA_{m,n}$ is as in \autoref{lem.max.set.A} and we let $\CA_{m,n}=0$
unless $0\le m\le n$.
\endlemma

\proof
It suffices to observe that all sets $s\in\fS$ have cardinality of the same
parity and that $\bigl|\ls|s|-\ls|r|\bigr|\le8$ for any pair $r,s\in\fS$.
\endproof

The few remaining cases are listed below.
\roster
\item\label{bound.Dn.2e}
If $\bR_k(\Ga)\ni\pm2e_i$, $i\in\supp(\Ga)$, then
$\bnd(\bR_k(\Ga))=\ls|\supp(\Ga)|$.
\endroster
Assume that
$l_k=\sum(\pm e_i)$, $i\in o\subset\CS$, $\ls|o|\le4$.
If $\Ga=0$, then
\roster[\lastitem]
\item\label{bound.Dn.zero}
$\ls|o\cap\supp(\Ga)|=0$, $1$, $2$ and
$\bnd(\bR_k(\Ga))\le1$, $2$,
$4\bigl\lfloor\frac12\ls|\supp(\Ga)|\bigr\rfloor$, respectively,
\endroster
similar to \autoref{s.An}.
(Here, the last number is a bound on the size of
a union of (affine) Dynkin diagrams admitting an
isometry to $\bD_{\ls|\supp(\Ga)|}$, \cf. \autoref{ss.integral.roots} below.)
If $\Ga\ne0$, the numbers of
signs $\pm$ within $\supp(\Ga)$ are also fixed, and the options are as
follows:
\roster[\lastitem]
\item\label{bound.Dn.same}
$m:=\ls|o\cap\supp(\Ga)|\le3$ and all signs are the same:
by an analogue of~\eqref{eq.A.intr},
a bound
on $\bnd(\bR_k(\Ga))$ is given by \autoref{lem.max.set.A} applied to
$\Sigma=\supp(\Ga)$;
\item\label{bound.Dn.roots}
$\ls|o\cap\supp(\Ga)|=2$ and the signs differ:
$\bnd(\bR_k(\Ga))=\ls|\supp(\Ga)|$ as in \autoref{s.An}\iref{bound.An.roots};
\item\label{bound.Dn.three}
$\ls|o\cap\supp(\Ga)|=3$ and the signs differ:
$\bnd(\bR_k(\Ga))=\ls|\supp(\Ga)|=3$.
\endroster

\subsection{Other root lattices}\label{s.other}
We use the description of \autoref{s.An} for the blocks $\bA_n$, $n\ge8$, and
that of \autoref{s.Dn}, for $\bD_n$, $n\ge7$. Below we outline a few more
tricks that simplify the computations for other blocks, mainly those of
type~$\bE_8$.

\subsubsection{Integral roots}\label{ss.integral.roots}
Assume that $\hbar_k=0$ and $l_k$ is an integral root, \ie, $l_k^2=2$ and
$l_k\in\bR_k$.
By~\eqref{eq.intersection.block}, the pairwise products of roots take values
in $\{0,-1,-2\}$, \ie,
an admissible subset in $\bR_k$ is a union~$\Gamma$ of (affine) Dynkin
diagrams (including those of type \smash{$\tA_1$})
that is mapped isometrically to~$\bR_k$.
If $\bR_k\cong\bE_8$, $\bE_7$, or $\bD_{\text{even}}$, \cf.
\autoref{s.Dn}\iref{bound.Dn.zero}, then $\bR_k$ is rationally generated by
pairwise orthogonal roots and, hence, the maximal union~$\Gamma$ as above is
$n\tA_1$, $n=\rank\bR_k$. Thus, we have $\bnd(\bR_k)=2n$ in this case.

If $\bR_k\cong\bE_8$ and $\hbar_k,l_k\in\bR_k$ are orthogonal integral roots,
$\hbar_k^2=l_k^2=2$, $l_k\cdot\hbar_k=0$, we can apply the above argument to
$\hbar_k^\perp\cong\bE_7$ to obtain $\bnd(\bR_k)=14$.
If $l_k\cdot\hbar_k=1$,
the bound is $\bnd(\bR_k)\le4$, which is
the maximal valency of a vertex in (any) affine
Dynkin diagram. If $l_k\cdot\hbar_k=2$, then $l_k=\hbar_k$ and
$\cnt(\bR_k)=\bnd(\bR_k)=1$.

\subsubsection{Planes contained in $\bE\sb8$}\label{ss.E8}
Assume that $\bR_k\cong\bE_8$ and $l_k^2=4$, so that necessarily $l=l_k$ and
$l_k\cdot\hbar_k=4$ (and, hence, $\hbar_k^2\ge4$).
Below we consider two cases where
a long computation can be simplified.

If $\hbar_k^2=12$, then $\hbar=\hbar_k$ and all planes~$l_k$ are in the
index~$2$ sublattice
\[*
\bigl\{x\in\bR_k\bigm|x\cdot\hbar_k=0\bmod2\bigr\}\cong\bD_8.
\]
Up to automorphism of $\bD_8$, we have $\hbar_k=2(\vv{o})$, $o=\{1,2,3\}$,
and $l_k=\vv{r}\pm\e_i\pm\e_j$, where $r\subset o$, $\ls|r|=2$, and
$i,j\in\CI\sminus o$. Arguing as in \autoref{s.Dn}, we obtain
$\bnd(\bR_k)\le24$.

If $\hbar_k^2=8$ and $\hbar_k\in2\bR_k$, then each plane has the form
$l_k=\frac12\hbar_k+r$, where $r$ is a root in $\hbar_k^\perp\cong\bE_7$.
Hence, we have $\bnd(\bR_k)=14$ as in \autoref{ss.integral.roots}.

\subsection{The computation of $\Bnd\sb{d}(\Cluster)$}\label{s.Bset}
Most sets $\Bnd_d(\Cluster)$ or $\pBnd_d(\Cluster)$
introduced in \autoref{s.orbits} are computed using
the so-called \emph{patterns}, as explained below in this section.

\subsubsection{Patterns\pdfstr{}{
 {\rm(see~\cite{degt:sextics})}}}\label{s.patterns}
The \emph{pattern} of a pseudo-geometric set $\fL$ is the function
\[*
\pat_\fL\:\Orb\to\NN,\quad \orb\mapsto\ls|\fL\cap\orb|\in\bset(\orb).
\]
To compute a set $\Bnd_d(\Cluster)$, we start with listing,
up to the action of the group $\stab\hbar$,
the abstract restricted patterns
$\pat\:\Cluster\to\NN$ satisfying the conditions
\[*
\pat(\orb)\in\bset(\orb), \qquad
\sum\pat(\orb)\ge\bnd(\Cluster)-d,\quad\orb\in\Cluster.
\]
Then, using the precomputed collections $\Bset(\orb)$, we build a
(pseudo-)geometric set $\fL$
orbit by orbit, as the completion of a union
$\bigcup_{\orb\in\Cluster}\fL_\orb$, $\fL_\orb\in\Bset(\orb)$,
$\ls|\fL_\orb\cap\orb|=\pat(\orb)$. In this computation, we
add combinatorial orbits $\orb\in\Cluster$ one by one,
use the group $\RG_\hbar(\bN)$ to reduce the overcounting,
and, at each step, make sure that the partial set~$\fL$
constructed is (pseudo-)geometric and that $\pat_\fL|_\Cluster=\pat$.
Further details are found in~\cite{degt:sextics}; we reuse the code developed
there.

\subsubsection{Clusters\pdfstr{}{
 {\rm(see~\cite{degt:sextics})}}}\label{s.clusters}
If the number $m(\borb_n)$ of combinatorial orbits in an orbit
$\borb_n=\Cluster$ is large, listing all abstract patterns in
\autoref{s.patterns} is difficult. In this case, we try to
subdivide~$\borb_n$ into \emph{clusters}~$\cluster_k$, not necessarily
disjoint, consisting of whole combinatorial orbits and such that $\stab\hbar$
induces an at least $1$-transitive action on the set of clusters. (The
construction of clusters, usually ``natural'', is described in the
respective proof sections case by case.) Then, we can compute abstract
patterns and construct (pseudo-)geometric sets cluster by cluster, assuming
the latter ordered by the decreasing of their ``complexity''
(see~\cite{degt:sextics} for more details).
Within each
cluster, we still use patterns
(compatible with the clusters already considered) and extend the
(pseudo-)geometric set orbit by orbit,
as in \autoref{s.patterns}.

\subsubsection{Meta-patterns}\label{s.meta}
In some cases, the number~$u$ of clusters is still large (requiring too many
steps in \autoref{s.clusters}), whereas each cluster is relatively small, so
that we can easily compute the set $\Bnd_*(\cluster_*)$ of
$\OG(\hbar)$-orbits of geometric sets pseudo-generated by a single cluster.
In these cases, we construct a geometric set
\[*
\fL=\fL_u\supset\fL_{u-1}\supset\ldots\supset\fL_0=\varnothing
\]
cluster by
cluster, using the full symmetry group $\OG_\hbar(\bN)$ and
a \emph{meta-pattern}, \ie, a set of values
\[*
\mpat:=\bigl\{\fT_1\ge\ldots\ge\fT_u\bigr\},\quad
 \fT_i\in\Bnd_*(\cluster_*),
\]
ordered lexicographically by the decreasing of the pair
$(\ls|\mval_i|,\ls|\fT_i|)$, $\mval_i\in\fT_i$, and such that
$\sum\ls|\mval_i|\ge\bnd(\Cluster)-d$. At each step~$k$, we pass from
$\fL_{k-1}$ to the sets
\[*
\fL_k:=\fF(\hbar)\cap\spn_2(\fL_{k-1}\cup\mval_k),
\]
taking for
$\mval_k\in\fT_k$ a single representative of each orbit of the
$\OG_\hbar(\bN)$-stabilizer of $\fL_{k-1}$; then, we select those sets~$\fL_k$ that
are pseudo-geometric and satisfy the condition
$\ls|\fL_k\cap\cluster_i|=\ls|\mval_i|$ for all $i=1,\ldots,k$.
This algorithm is similar to \autoref{s.patterns} (\cf.~\cite{degt:sextics}),
except that clusters are used instead of combinatorial orbits and the full
symmetry group $\OG_\hbar(\bN)$ is used instead of $\RG(\bN)$: at the
beginning, preparing the meta-patterns,
we do not associate the values $\fT_k$ with particular
clusters~$\cluster_k$.

\subsubsection{Iterated maximal subsets}\label{s.maximal}
Let $\Cluster$ be a complete \emph{admissible} set. (Typically, we take for
$\Cluster$ a union of orbits.) Clearly, any subset $\fL\subset\Cluster$ is also
admissible. If $\fL$ is also complete, then either
$\spn_2\fL\subset\spn_2\Cluster$ has positive corank or
$\spn_2\Cluster/\!\spn_2\fL$ is a $2$-group, nontrivial if $\fL$ is proper.
It follows that any maximal (with respect to inclusion)
proper complete subset
$\fL\subset\Cluster$ is of the form
\[*
\fL=\Cluster\cap\Ker v,\qquad
v\in\Hom(\spn_2\Cluster,\F_2)=\spn_2\Cluster/2\spn_2\Cluster,
\]
and these sets can easily be listed. (Usually, we also take into account the
action on $\spn_2\Cluster/2\spn_2\Cluster$ of the $\OG_\hbar(\bN)$-stabilizer
of~$\Cluster$.
Note that we do not assert that $\spn_2\Cluster/\!\spn_2\fL=\Z/2$.)
Iterating this procedure (and eliminating repetitions at the
intermediate steps), we can list all complete subsets $\fL\subset\Cluster$
and, in particular, compute the sets
\smash{$\Bnd_d(\Cluster)\subset\pBnd_d(\Cluster)$}.

\section{Lattices with few components}\label{S.few}

In this section we consider the Niemeier lattices with at most eight irreducible root
components and prove the following theorem.

\theorem\label{th:few_components}
Let $N$ be a Niemeier lattice other than $\N(12\bA_2)$, $\N(24\bA_1)$ or $\Lambda$.
Then, for any $\hbar \in N$, $\hbar^2 = 12$, and any geometric set $\fL \subset \fF(\hbar)$,
one has $\ls|\fL| \leq 350$.
\endtheorem

\proof
For each $12$-polarized Niemeier lattice $\bN\ni\hbar$, we list all
$\OG_\hbar(\bN)$-orbits $\borb_n$ and compute the number
$m(\borb_n)$ of combinatorial orbits $\orb\subset\borb_n$, the count
$\cnt(\orb)$, and the na\"{\i}ve bound on $\ls|\fL\cap\orb|$ given
by~\eqref{eq.bound}. Sometimes, this bound is improved by a brute force
computation; the best bound obtained is denoted by $\bnd(\orb)$.
The results are listed in several tables below,
where the bounds $\bnd(\orb)$ confirmed by brute force are underlined.

\convention\label{conv.tables}
{\4Listed in the tables are the isomorphism classes of vectors~$\hbar$ (the
rows with bold index in the first column) and, for each~$\hbar$, the orbits of
conics. The columns are as follows:
\roster*
\item
index for further references,
\item
the vector $\hbar$ or conic, one entry for each indecomposable component of
the maximal root system (see the next paragraph for the notation),
\item
the number of combinatorial orbits in each orbit,
\item
the line count, total (for $\hbar$) or per one combinatorial orbit, and
\item
the na\"\i ve bound (also total or per one combinatorial orbit).
\endroster
}

For the components $\hbar_k\in\bR_k\dual$ of~$\hbar$ we use the notation
$\hvec{\hbar_k^2}_d$, where $d$ is either the discriminant class
of~$\hbar_k^2$ (in the notation of~\cite{Conway.Sloane}) or,
if $\hbar_k\in\bR_k$, the
symbol
\[*
\mbox{$0$ (if $\hbar_k=0$),\quad $\circ$ (if $\hbar_k^2=2$),\quad
$\bullet$ (if $\hbar_k^2=4$),\quad $*$ (if $\hbar_k^2=6$)}.
\]
For the components $l_k$ of a plane~$l$, we use the notation
$\lvec{l_k\cdot\hbar_k}_d$, where $d$ has
the same meaning as for~$\hbar$. (By \autoref{lem.shortest}, $l_k^2$ is
uniquely determined by the subscript.)
\emph{In the cases considered}, these data determine $(\hbar_k,l_k)$ up to
$\RG(\bR_k)$.
(The occasional superscripts are artifacts left over from the
complete set of tables.)

Also shown in the tables is the na\"{\i}ve \latin{a priori} estimate $\bnd(\Orb)$
given by~\eqref{eq.naive}.
For the vast majority of lattices $\bN\ni\hbar$ we
have
$\bnd(\Orb)\le \goal$\iffullversion. \else, and these pairs are omitted.
\fi
The few cases where $\bnd(\Orb) \ge \goal$
are shown in bold, and we treat them separately.
\iffullversion\else
In the ``trivial'' cases equivalent to those already considered in another
lattice~$\bN$,
we usually also omit the list of orbits~$\borb_n$.
\fi
\endconvention


\lattice{D24}
\lattice{D16_E8}
\lattice{3E8}
\lattice{A24}
\lattice{2D12}
\lattice{A17_E7}
\lattice{D10_2E7}
\lattice{A15_D9}
\lattice{3D8}
\lattice{2A12}
\lattice{A11_D7_E6}
\lattice{4E6}
\lattice{2A9_D6}
\lattice{4D6}
\lattice{3A8}
\lattice{2A7_2D5}
\lattice{4A6}
\lattice{4A5_D4}

\lattice{6D4}

\def\porb#1{\obj2{\cluster_{#1}}}
\subsubsection{Configuration~\hlink{6D4}{1}}\label{s.6D4-1}
We subdivide the orbit $\oorb2$ into six clusters (not disjoint)
\[*
\porb{k}:=\bigl\{\orb\subset\oorb2\bigm|k\notin\supp\orb\bigr\},
\quad k\in\Omega,
\]
and use these clusters (see \autoref{s.clusters}) to compute
$\pBnd_{24}(\oorb2)=\varnothing$.


\lattice{6A4}

\lattice{8A3}

\def\porb#1{\obj6{\cluster_{#1}}}
\subsubsection{Configuration~\hlink{8A3}{1}}\label{s.8A3-1}
Denote $\CK:=\bigl\{k\in\Omega\bigm|\hh_k^2=1\bigr\}$, and
subdivide~$\oorb6$ into seven pairwise disjoint clusters
\[*
\porb{k}:=\bigl\{\orb\subset\oorb6\bigm|
 \mbox{$l_k^2=1$ for $l\in\orb$}\bigr\},
\quad k\in\CK.
\]
Using these clusters (see \autoref{s.clusters}),
we show that $\pBnd_{56}(\oorb6)=\varnothing$.


\def\porb#1{\obj6{\cluster_{#1}}}
\subsubsection{Configuration~\hlink{8A3}{2}}\label{s.8A3-2}
Denote $\CK:=\bigl\{k\in\Omega\bigm|\mbox{$\hh_k$ is a root}\bigr\}$, and
subdivide~$\oorb6$ into four pairwise disjoint clusters
\[*
\porb{k}:=\bigl\{\orb\subset\oorb6\bigm|k\notin\supp\orb\bigr\},
\quad k\in\CK.
\]
Using these clusters (see \autoref{s.clusters}),
we show that $\pBnd_{24}(\oorb6)=\varnothing$.
\endproof


\section{The lattice $N(12\bA\sb2)$}\label{S.12A2}

The goal of this section is the following theorem.

\theorem\label{th:12A_2}
For any $\hbar \in \N(12\bA_2)$, $\hbar^2 = 12$, and any geometric set $\fL \subset \fF(\hbar)$,
one has $\ls|\fL| \leq 350$.
\endtheorem

\proof
We proceed as in the previous section, considering $\OG(\bN)$-orbits of
square~$12$ vectors $\hbar\in\N(12\bA_2)$ one by one.
\def\ltitle{}%
\def\ltext{%
 There are \lcount{\thelattice} orbits;
 $\bnd(\fF)\le\lmax{\thelattice}$%
 \iffullversion.\else\ (see \autoref{tab.\thelattice}).\fi
}%
{\def\vectorbox#1{#1\,\,}\def\countskip{\ }\lattice{12A2}}

\def\porb#1{\obj3{\cluster_{#1}}}
\subsection{Configuration~\hlink{12A2}{1}}\label{s.12A2-1}
Let $\CK:=\bigl\{k\in\Omega\bigm|\hh_k^2=\frac23\bigr\}$, and
subdivide~$\oorb3$ into eleven clusters (not disjoint)
\[*
\porb{k}:=\bigl\{\orb\subset\oorb2\bigm|l_k\cdot\hh_k=\tfrac23\bigr\},
\quad k\in\CK,
\]
to show (see \autoref{s.clusters}) that $\pBnd_{56}(\oorb3)=\varnothing$.


\def\porb#1{\obj2{\cluster_{#1}}}
\subsection{Configuration~\hlink{12A2}{3}}\label{s.12A2-3}
Denote $\CK:=\bigl\{k\in\Omega\bigm|\mbox{$\hh_k$ is a root}\bigr\}$, and
subdivide~$\oorb2$ into three pairwise disjoint clusters
\[*
\porb{k}:=\bigl\{\orb\subset\oorb2\bigm|k\notin\supp\orb\bigr\},
\quad k\in\CK.
\]
Using these clusters (see \autoref{s.clusters}),
we show that $\pBnd_{20}(\oorb2)=\varnothing$.
More precisely, we find eight sets $\fL\subset\pBnd_6(\porb1)$, all of size
$\ls|\fL|=72$. Seven sets are maximal, see \autoref{ss.max.set}.
The eighth one is
of rank $20$, and adding a single maximal orbit (see \autoref{ss.max.ext})
from another cluster produces
a single maximal set of size~$156$.

A direct computation using patterns (see \autoref{s.patterns}) shows that there are
four sets $\fL\in\pBnd_7(\oorb5)$; they are all maximal
(see \autoref{ss.max.set}) and of size $\ls|\fL|=44$.

Finally,
let $\Cluster:=\oorb4\cup\oorb6$; this set is complete and admissible.
Listing iterated maximal subsets (see \autoref{s.maximal}),
we find $19$ sets
$\fL\in\Bnd_{13}(\Cluster)$.
All but one are ruled out by counting maximal orbits (see \autoref{ss.max.orbit})
with the test set $\fT=\oorb2$.
The exception is $\fL=\oorb4$, and in this case we use patterns (see
\autoref{s.patterns}) to show that, for any extension
$\fL'\supset\fL$ satisfying~\eqref{eq.fixed.ext}, we have
\roster*
\item
$\ls|\fL'\cap\borb|\le\bnd(\borb)-4$ for $\borb=\oorb1$, $\oorb3$ or any of
the three clusters $\porb{k}\subset\oorb2$, and
\item
$\ls|\fL'\cap\oorb5| \le \bnd(\oorb5)-33$.
\endroster

\remark\label{rem.12A2-3}
The sublattice $\spn_2\Cluster\subset\bN$ has no proper mild extensions, and its
$2$-discriminant violates the condition stated in
\autoref{Niemeier_back}\iref{cond3}.
Therefore, when computing $\Bnd_{13}(\Cluster)$, we confine ourselves to
subsets $\fL\subset\Cluster$ of corank at least one. The rest of the argument
applies to pseudo-geometric sets as well.
\endremark


\def\porb#1{\obj5{\cluster_{#1}}}
\subsection{Configuration~\hlink{12A2}{4}}\label{s.12A2-4}
Denote $\CK:=\bigl\{k\in\Omega\bigm|\hh_k^2=\frac83\bigr\}$, and
subdivide~$\oorb5$ into two disjoint clusters
\[*
\porb{k}:=\bigl\{\orb\subset\oorb5\bigm|
 \mbox{$l_k\cdot\hh_k=\tfrac43$ for $l\in\orb$}\bigr\},
\quad k\in\CK,
\]
to show (see \autoref{s.clusters}) that $\Bnd_{25}(\oorb5)=\varnothing$.
On the other hand, one has
\[*
\Bnd_0(\oorb1\cup\oorb3\cup\oorb4\cup\oorb7\cup\oorb8\cup\oorb9)=
\Bnd_0(\oorb2)=\Bnd_0(\oorb6)=\varnothing
\]
directly as in \autoref{s.patterns}. (Here, we do use condition~\iref{cond3} in
\autoref{Niemeier_back}.)


\subsection{Configuration~\hlink{12A2}{5}}\label{s.12A2-5}
We have $\Bnd_0(\Orb)=\varnothing$ directly as in \autoref{s.patterns}.
\endproof


\section{The lattice $N(24\bA\sb1)$}

The goal of this section is the following theorem.

\newcs{check-24A1-7}{\noexpand\hidelines}
\newcs{check-24A1-8}{\noexpand\hidelines}
\newcs{check-24A1-9}{\noexpand\hidelines}
\newcs{check-24A1-11}{\noexpand\hidelines}
\newcs{check-24A1-12}{\noexpand\hidelines}
\makeatletter
\let\@int\@@int
\let\@rat\@@rat
\makeatother

\theorem\label{th:24A_1}
Let $\hbar \in \N(24\bA_1)$, $\hbar^2 = 12$, and let $\fL \subset \fF(\hbar)$
be a geometric set. Then, unless
\roster*
\item $\ls|\fL| = 357$ and $\fL = \Lsub1$, see \eqref{eq.Lsub.1},
\endroster
one has $\ls|\fL| \leq 350$.
\endtheorem

\proof
We proceed as in the previous sections.
Each component $v_k\in\bR_k\dual$,
$k\in\Omega$, of a vector $v\in\bN$
is a multiple of the generator $r_k\in\bR_k$.
To save space, we use the following notation for the coefficient~$\Ga$ in
$v_k=\Ga r_k$:
\[*
\mbox{$\Azero$ ($\Ga=0$),\quad
$\Ahalf$ or $\Aminus$ ($\Ga=\pm\frac12$),\quad
$\Aroot$ ($\Ga=\pm1$),\quad
$\Aplus$ ($\Ga=\pm\frac32$),\quad
$\Art$ ($\Ga=\pm2$)}.
\]
Here, $\Aminus$ is used only for~$l_k$
and only if $\hbar_k\cdot l_k<0$; in all other cases, the
signs of~$l_k$ and $\hbar_k$ agree, so that we have $\hbar_k\cdot l_k\ge0$.

\def\thelattice{24A1}
There are \lcount{\thelattice} orbits $\hbar\in\N(24\bA_1)$,
and $\bnd(\fF)\le\lmax{\thelattice}$%
\iffullversion.\else\ (see \autoref{tab.24A1}).\fi

{\def\latticetext{}\lattice{24A1}}

Fix a basis $\{r_k\}$, $k\in\Omega$, for $24\bA_1$ consisting of roots.
The kernel
\[*
\bN\bmod24\bA_2\subset\discr24\bA_1\cong(\Z/2)^{24}
\]
of the extension is the Golay code $\CC_{24}$
(see~\cite{Conway.Sloane}).
The map $\supp$ identifies codewords with
subsets of $\Omega$; then, $\CC_{24}$ is invariant
under complement and, in addition to~$\varnothing$ and~$\Omega$,
it consists of $759$ octads, $759$ complements thereof,
and $2576$ dodecads.

To simplify the notation, we identify the basis vectors $r_k$
(assumed fixed) with their indices $k\in\Omega$. For a subset
$\CS\subset\Omega$, we let $\vv\CS:=\sum r$, $r\in\CS$,
and abbreviate $\cw\CS:=\frac12\vv\CS\in\bN$ if $\CS\in\CC_{24}$ is a codeword.

\subsection{Configuration~\hlink{24A1}{1}}\label{s.24A1-1}
We have $\stab\hh=M_{24}$ (the Mathieu group, which is the symmetry group of
the Golay code) and $\hh=\cw\Omega$.
The set $\fF(\hh)=\oorb1$ is complete and admissible;
listing iterated maximal subsets (see \autoref{s.maximal}),
after four steps of the algorithm we find out
that there are no complete
subsets $\fL\subset\fF(\hh)$ of rank $\rank\fL\le21$ and
size $\ls|\fL|\ge385$ (\cf. \autoref{s.Leech-1} below).

\def\porb#1{\obj3{\cluster_{#1}}}

\subsection{Configuration~\hlink{24A1}{2}}\label{s.24A1-2}
We have $\ls|\stab\hh|=11520$ and
$\hh=\vv\CR$, where $\ls|\CR|=6$ and $\CR$ is a subset of an octad
(\cf. \autoref{s.24A1-11}).
We subdivide $\oorb3$ into $15$ pairwise disjoint clusters
\[*
\porb{o}:=\bigl\{\orb\subset\oorb3\bigm|\supp\orb\supset o\bigr\},
\quad o\subset\CR,\ \ls|o|=4,
\]
and use these clusters (see \autoref{s.clusters}) to
show that $\Bnd_{156}(\oorb3)=\varnothing$.
(The set $\bset(\orb)$, $\orb\subset\oorb3$,
equals $\{0,\ldots,4,6,8\}$, which simplifies the computation.)


\def\porb#1{\obj3{\cluster_{#1}}}

\subsection{Configuration~\hlink{24A1}{3}}\label{s.24A1-3}
We have $\ls|\stab\hh|=11520$ and $\hh=\cw\CO+\vv\CR$,
where $\CO$ is an octad and $\CR\subset\CO$ a $2$-element set.
Subdivide $\oorb3$ into $15$ pairwise disjoint clusters
\[*
\porb{o}:=\bigl\{\orb\subset\oorb3\bigm|\supp\orb\supset o\bigr\},
\quad o\subset\CO\sminus\CR,\ \ls|o|=2.
\]
and use these clusters (see \autoref{s.clusters}) to
show that $\Bnd_{156}(\oorb3)=\varnothing$.
(The set $\bset(\orb)$, $\orb\subset\oorb3$,
equals $\{0,\ldots,4,6,8\}$, which simplifies the computation.
In spite of the apparent similarity, this polarized lattice is not equivalent
to \config{24A1}2, see \autoref{s.24A1-2}.)


\def\porb#1{\obj3{\cluster_{#1}}}
\def\pporb#1{\obj2{\cluster'_{#1}}}

\subsection{Configuration~\hlink{24A1}{4}}\label{s.24A1-4}
We have $\ls|\stab\hh|=20160$ and $\hh=\cw\CO+r$, where $\CO\ni r$
is a codeword of length~$16$.
Let $\CK:=\Omega\sminus\CO$.

We subdivide the orbit~$\oorb3$ into $28$ pairwise disjoint clusters
\[*
\porb{k}:=\bigl\{\orb\subset\oorb3\bigm|\supp\orb\supset k\bigr\},
\quad k\subset\CK,\ \ls|k|=2,
\]
and, using these clusters and meta-patterns (see \autoref{s.meta}), show that
$\Bnd_{50}(\oorb3)=\varnothing$.
(Note that
$\ls|\fL\cap\porb{k}|\in\{0,\ldots,8,10,12\}$
for each~$k$ and each geometric set~$\fL$.)

Now, consider the complete admissible set
$\Cluster:=\oorb1\cup\oorb2\cup\oorb4$. Listing iterated maximal subsets
(see \autoref{s.maximal}), we find $34$ sets $\fL\in\Bnd_{70}(\Cluster)$, of
which
all but two are ruled out by counting maximal orbits (see
\autoref{ss.max.orbit}), using $\fT=\oorb3$ for the test set.

One of the two exceptions is a set $\fL$ of size $87$, for which
$\ls|\fT_\mu|=96$ (see \autoref{ss.max.orbit} for the notation).
Extending the pattern $\pat_\fL$ (see \autoref{ss.max.ext}) \via\
$\orb\mapsto\bnd(\orb)=2$ for each orbit $\orb\in\fT_\mu$
(and leaving the other values undefined)
and applying the algorithm of \autoref{s.patterns}, we show that
$\ls|\fL'|\le350$ for each extension $\fL'\supset\fL$
satisfying~\eqref{eq.fixed.ext}.

The other exception is $\fL=\oorb2$, with $\fT_\mu=\fT$.
Arguing as in \autoref{ss.max.ext} (extending $\pat_\fL$ by a single extra
value $\orb\mapsto1$ or $2$ for some orbit $\orb\subset\oorb3$ and analyzing
the output),
we observe that
$\ls|\fL'\cap\porb{k}|\in\{0,12\}$ for each cluster $\porb{k}$ and each
extension $\fL'\supset\fL$ satisfying~\eqref{eq.fixed.ext}.
Using this observation and meta-patterns (see \autoref{s.meta}) with
the set of values restricted accordingly,
we show that
$\ls|\fL'|\le350$ with the exception of a single,
up to $\OG_{\hh}(\bN)$, saturated
set~$\Lsub1$
of rank $21$ and size $357$.
This set is characterised
by any of the eight patterns
\[*
\pat_\cluster(\orb)=\begin{cases}
                      1, & \mbox{if } \orb\subset\oorb2, \\
                      2, & \mbox{if } \orb\subset\cluster, \\
                      0, & \mbox{otherwise},
                    \end{cases}
\]
where $\cluster:=\bigl\{\orb\subset\oorb3\bigm|\supp\orb\not\ni k\bigr\}$ for
some fixed $k\in\CK$. Alternatively,
\[
\Lsub1=\fF(\hh)\cap\spn(\cw\CK,\hh-4r,k)^\perp.
\sublabel1
\]




\def\porb#1{\obj3{\cluster_{#1}}}
\def\pporb#1{\obj2{\cluster'_{#1}}}
\subsection{Configuration~\hlink{24A1}{5}}\label{s.24A1-5}
We have $\ls|\stab\hh|=2304$ and $\hh=\cw\CO+\vv\CR$, where
$\CO$ is an octad and $\CR\subset\Omega\sminus\CO$ a
$4$-element set, so that there is an octad $o\supset\CR$ such that
$\ls|o\cap\CO|=4$ (\cf. \autoref{s.24A1-12}).
We subdivide $\oorb3$ into four pairwise disjoint
clusters
\[*
\porb{r}:=\bigl\{\orb\subset\oorb3\bigm|\supp\orb\not\ni r\bigr\},
\quad r\in\CR,
\]
and use these clusters (see \autoref{s.clusters}) to
show that $\Bnd_{60}(\oorb3)=\varnothing$.
Then, subdividing~$\oorb2$ into six pairwise disjoint clusters
\[*
\pporb{s}:=\bigl\{\orb\subset\oorb2\bigm|\supp\orb\supset s\bigr\},
\quad s\subset\CR,\ \ls|s|=2,
\]
we show (see \autoref{s.clusters}) that $\Bnd_{39}(\oorb2)=\varnothing$.


\def\porb#1{\obj4{\cluster_{#1}}}
\def\pporb#1{\obj2{\cluster'_{#1}}}

\subsection{Configuration~\hlink{24A1}{6}}\label{s.24A1-6}
We have
$\ls|\stab\hh|=11520$ and $\hh=\cw\CO+\vv\CR$,
where
$\CO$ is a codeword of length~$16$ and
$\CR\subset\Omega\sminus\CO$ is a $2$-element set.
Let $\CK:=\Omega\sminus(\CO\cup\CR)$.
We subdivide $\oorb4$ into six clusters (not disjoint)
\[*
\porb{k}:=\bigl\{\orb\subset\oorb4\bigm|\supp\orb\ni k\bigr\},
\quad k\in\CK,
\]
and use these clusters (see \autoref{s.clusters})
to show that $\Bnd_{35}(\oorb4)=\varnothing$.
Next, we subdivide $\oorb2$ into
twelve pairwise disjoint clusters
\[*
\pporb{s}:=\bigl\{\orb\subset\oorb2\bigm|\supp\orb\supset s\bigr\},
\quad s\in\CR\times\CK,
\]
and show (using \autoref{s.clusters})
that $\Bnd_{22}(\oorb2)=\varnothing$. Finally, listing iterated
maximal subsets (see \autoref{s.maximal})
in the complete admissible set
$\Cluster:=\oorb1\cup\oorb3\cup\oorb5$, we obtain
$\Bnd_{29}(\Cluster)=\varnothing$.




\def\porb#1{\obj3{\cluster_{#1}}}

\subsection{Configuration~\hlink{24A1}{7}}\label{s.24A1-7}
We have $\ls|\stab\hh|=336$ and $\hh=\cw\CO+\vv\CR+r$, where $\CO\ni r$ is an
octad and $\CR\subset\Omega\sminus\CO$ a $2$-element set.
This configuration is equivalent to \config{8A3}1 in $\N(8\bA_3)$,
see \autoref{s.8A3-1},
where we treat pseudo-geometric sets as well.

\subsection{Configuration~\hlink{24A1}{8}}\label{s.24A1-8}
We have $\ls|\stab\hh|=660$ and $\hh=\cw\CO+r+s$, where $\CO\ni r$ is a
dodecad and $s\in\Omega\sminus\CO$.
This configuration is equivalent to \config{12A2}1 in $\N(12\bA_2)$,
see \autoref{s.12A2-1}, where we treat pseudo-geometric sets as well.

\subsection{Configuration~\hlink{24A1}{9}}\label{s.24A1-9}
We have $\ls|\stab\hh|=432$ and $\hh=\cw\CO+\vv\CR$, where $\CO$ is a dodecad
and $\CR\subset\Omega\sminus\CO$ a $3$-element set.
This configuration is equivalent to \config{s.12A2}3 in $\N(12\bA_2)$,
see \autoref{s.12A2-3}, where we mainly treat pseudo-geometric sets. The only
exception, explained in \autoref{rem.12A2-3}, applies to the present case as
well, as
the sublattice $\spn_2(\borb_3\cup\borb_6)\subset\bN$
has no proper mild extensions.

\subsection{Configuration~\hlink{24A1}{10}}\label{s.24A1-10}
We have $\ls|\stab\hh|=40320$ and $\hh=\vv\CR+2r$,
where $\CR$ is a $2$-element set and $r\notin\CR$.
Using patterns (see \autoref{s.patterns}) we show that
$\Bnd_{28}(\oorb2)=\varnothing$.
(Note that $\bset(\orb)=\{0,\ldots,8,10,16\}$ for
$\orb\subset\oorb2$.)

\def\porb#1{\obj2{\cluster_{#1}}}

\subsection{Configuration~\hlink{24A1}{11}}\label{s.24A1-11}
We have $\ls|\stab\hh|=2160$ and
$\hh=\vv\CR$, where $\ls|\CR|=6$ and $\CR$ is \emph{not} a subset of an octad
(\cf. \autoref{s.24A1-2}).
This configuration is equivalent to \config{6D4}1 in $\N(6\bD_4)$,
see \autoref{s.6D4-1}, where we treat pseudo-geometric sets as well.

\def\porb#1{\obj5{\cluster_{#1}}}

\subsection{Configuration~\hlink{24A1}{12}}\label{s.24A1-12}
We have $\ls|\stab\hh|=192$ and $\hh=\cw\CO+\vv\CR$, where
$\CO$ is an octad and $\CR\subset\Omega\sminus\CO$ a
$4$-element set, so that there is \emph{no} octad $o\supset\CR$ such that
$\ls|o\cap\CO|=4$ (\cf. \autoref{s.24A1-5}).
This configuration is equivalent to \config{8A3}2 in $\N(8\bA_3)$,
see \autoref{s.8A3-2}, where we treat pseudo-geometric sets as well.
\endproof

\section{The Leech lattice}\label{Leech}

Let $N := \Lambda$ be the Leech lattice.
We prove the following theorem.

\theorem\label{th:Leech}
Let $\hbar \in \Lambda$, $\hbar^2 = 12$, and let $\fL \subset \fF(\hbar)$ be
a geometric set $\fL \subset \fF(\hbar)$. Then, unless
\roster*
\item $\ls|\fL| = 405$ and $\fL = \Lmax1$, see \eqref{eq.Lmax.1}, or
\item $\ls|\fL| = 357$ and $\fL = \Lsub2$, see \eqref{eq.Lsub.2}, or
\item $\ls|\fL| = 351$ and $\fL = \Lmisc1$, see \eqref{eq.Lmisc.1}
\endroster
one has $\ls|\fL|\le297$.
\endtheorem

\proof
It is well known (see, \eg, Theorem 28 in \cite[Chapter~10]{Conway.Sloane})
that any nonzero class $[\hbar]\in\Lambda/2\Lambda$ is represented
by a unique pair $\pm a\in\Lambda$,
where $a^2=4,6$ or $a^2=8$ and $a$ is part of a fixed coordinate frame.
Since, clearly, $a^2=\hbar^2\bmod4$, and since $\Lambda$ is positive definite
and root free, for $\hbar^2=12$ we have either
\roster
\item\label{a=4}
$\hbar = a + 2b$, where $a^2 = b^2 = 4$ and $a \cdot b = -2$
(\emph{type $6_{22}$} in \loccit.), or
\item\label{a=8}
$\hbar = a + 2b$, where $a^2= 8$, $b^2 = 4$, and $a \cdot b = -3$
(\emph{type $6_{32}$} in \loccit.)
\endroster
Besides, a pair $a,b\in\Lambda$ as in item~\iref{a=4} or \iref{a=8}
is unique up to $\OG(\Lambda)$.
Thus, there are two $\OG(\Lambda)$-orbits of square~$12$ vectors
$\hbar\in\Lambda$ (see Theorem~29 in \loccit.)

The two cases are considered below. We use the shortcut
$F:=\spn_2\fF(\hbar)$.

\def\hh{{\hyperref[a=4]{\hbar}}}

\subsection{Configuration~\pdfstr{1}{$\hyperref[a=4]1$}:
 \pdfstr{h = a mod 2L}{$\hh=a\bmod2\Lambda$}, $a\sp2=4$}\label{s.Leech-1}
We have
\[*
\ls|\OG_\hh(F)|=55180984320,\qquad
\ls|\fF|=891,\qquad
\rank F=23,\qquad
\discr F=\Cal{U}\oplus\bigl\<\tfrac74\bigr\>,
\]
and all index~$8$ extensions of $F\oplus\Z a$, $a^2=4$, are isomorphic and
have vector~$\hh$ of the same
type.
Listing iterated maximal subsets (see \autoref{s.maximal}) in the full set
$\fF(\hh)$, which is obviously complete and admissible,
after four steps we obtain $17$ complete sets
$\fL\subset\fF(\hh)$ of rank $\rank\fL\le21$ and size $\ls|\fL|\ge285$.
Using Nikulin's theory~\cite{Nikulin:forms}, one can easily obtain the
following statement.

\lemma\label{lem.mild.ext}
None of the
$17$ sets above has a proper root-free finite index extension $S\supset\spn_2\fL$
\rom(even abstract, not necessarily lying in~$\Lambda$\rom)
such that $\hh\in4S\dual$.
\done
\endlemma

Only
eight of the $17$ sets are geometric, namely,
the sets $\Lmax1$, $\Lsub2$, and $\Lmisc1$
(the subscript indicating the number of planes) described below, two sets of
size $297$ (ranks $20$ and $21$), and three sets of size $285$ (ranks $20$,
$21$, and~$21$).

\remark
Technically, we work
in~$F$ rather than in $\Lambda$ itself, which gives us a slightly larger symmetry
group: indeed,
$\OG_\hh(\Lambda)$ induces on~$F$ an index~$6$ subgroup of
$\OG_\hh(F)=\Aut\fF(\hh)$
(computed by the \texttt{GRAPE} package \cite{GRAPE:nauty,GRAPE:paper,GRAPE}
in \GAP~\cite{GAP4}).
That is why we have to consider \emph{abstract} finite index extensions.
\endremark

The three large sets found can be described
in terms of square~$4$ vectors in~$\Lambda$, which are relatively easy to
handle.
Consider the lattice $U:=\Z a+\Z b+\Z c$ with the Gram matrix
\[*
\bmatrix
  4&-2& 1\\
 -2& 4&-2\\
  1&-2& 4\endbmatrix,
\]
and let $V:=U+\Z v$, $v^2=4$, be its extension such that $v\cdot a=1$ and the
other two products are as follows:
\begin{alignat}5
&  v\cdot b=1,&&\quad v\cdot c=-2
&&\quad\mbox{for $\Lmax1$}:\quad&\ls|{\Aut\fL}|&=349920,\quad&T&\cong-[6,3,6],
\maxlabel1\\
&  v\cdot b=1,&&\quad v\cdot c=0
&&\quad\mbox{for $\Lsub2$}:\quad&\ls|{\Aut\fL}|&=10080,\quad&T&\cong- [2,1,18],
\sublabel2\\
&  v\cdot b=-2,&&\quad v\cdot c=1
&&\quad\mbox{for $\Lmisc1$}:\quad&\ls|{\Aut\fL}|&=31104,\quad&T&\cong-[6,0,6].
\misclabel1{351}
\end{alignat}
(For
the reader's convenience, we also list the size of the group $\Aut\fL$
of the graph automorphisms of~$\fL$,
computed \via\ \texttt{GRAPE}, and the \emph{transcendental lattice}
$T$, see \autoref{S.proofs} below.)
Up to $\OG_\hh(\Lambda)$, there is a unique isometry $V\into\Lambda$ such
that
$a+2b\mapsto\hh$.
Then, the set in question is
\[*
\fF(\hh)\cap\bigl((\Z\hh\oplus V^\perp)\otimes\Q\bigr).
\]


\def\hh{{\hyperref[a=8]{\hbar}}}

\subsection{Configuration~\pdfstr{2}{$\hyperref[a=8]2$}:
 \pdfstr{h = a mod 2L}{$\hh=a\bmod2\Lambda$}, $a\sp2=8$}\label{s.Leech-2}
We have
\[*
\OG_\hh(\Lambda)=\OG_\hh(F)=M_{24},\qquad
\ls|\fF|=759,\qquad
\rank F=24,\qquad
\discr F=\bigl\<\tfrac14\bigr\>\oplus\bigl\<\tfrac74\bigr\>.
\]
This lattice has two index~$4$ extensions: one of them is~$\Lambda$, and the
other, $\N(24\bA_1)$. Therefore, this configuration is equivalent
to~\config{24A1}{1} in $\N(24\bA_1)$, see \autoref{s.24A1-1}, where we treat
pseudo-geometric sets as well.
\endproof

\section{Proofs of the main results}\label{S.proofs}

In this section, we complete the proof of
the principal results of the paper, \viz.
Theorems~\ref{th.main} and~\ref{th.main_real}.

\subsection{Proof of \autoref{th.main}}\label{proof.main}

Let $X \subset \Cp5$ be a smooth cubic $4$-fold such that $\ls|\Fn X| > 350$.
Applying the replanting procedure (see \autoref{s.Neimeier_replanting})
to the $3$-polarized lattice $M_X \ni h_X$,
we obtain
a $12$-polarized lattice $S \ni \hbar$ embedded to
a Niemeier lattice $N$ (see \autoref{prop.replanting})
and such that
\begin{itemize}
\item one has $\hbar \in 4S\dual$,
\item the torsion $\Tors(N/S)$ is a $2$-group,
\item $S$ satisfies all conditions of \autoref{Niemeier_back}.
\end{itemize}
By Propositions \ref{planeclasses} and \ref{prop.S},
there is a bijection between the set of planes in $X$ and
the set
\[*
\fL=\fF(\hbar)\cap S \subset \fF(\hbar)
= \bigl\{l \in N \bigm|\text{$l^2 = 4$, $l \cdot \hbar = 4$}\bigr\}
\]
of $\hbar$-planes in $S$.
Furthermore, $\fL$
is geometric, essentially, by the definition.
Thus, Theorems \ref{th:few_components}, \ref{th:12A_2},
\ref{th:24A_1}, and \ref{th:Leech} imply that
we are in one of the following situations:
\roster*
\item $N \simeq \N(24\bA_1)$
and $\fL$ coincides with $\Lsub1$ up to $\OG_{\hh}(\bN)$,
\item $N \simeq \Lambda$
and $\fL$ coincides with $\Lmax1$ up to $\OG_{\hh}(\bN)$,
\item $N \simeq \Lambda$
and $\fL$ coincides with $\Lsub2$ up to $\OG_{\hh}(\bN)$,
\item $N \simeq \Lambda$
and $\fL$ coincides with $\Lmisc1$ up to $\OG_{\hh}(\bN)$.
\endroster
The set $\Lsub2$ is graph isomorphic to $\Lsub1$; this is established
by the \texttt{GRAPE} package \cite{GRAPE:nauty,GRAPE:paper,GRAPE} in
\GAP~\cite{GAP4}.

\begin{proposition}\label{proposition_405_357_351}
Let $\fL$
be one of the sets $\Lmax1$, $\Lsub2$ or $\Lmisc1$.
Then, there exists a unique {\rm (}up to $PGL(\C, 6)${\rm )} smooth cubic $X \subset \Cp5$ such that
$\Fn X$ is graph isomorphic to $\fL$.
\end{proposition}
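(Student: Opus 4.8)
The plan is to leverage the global Torelli theorem together with the surjectivity of the period map in the form already recorded in the excerpt, turning the statement about cubics into a statement about the lattices $S$ (equivalently $M$) attached to the three exceptional configurations. First I would observe that each of $\Lmax1$, $\Lsub2$, $\Lmisc1$ is a \emph{saturated geometric} set of maximal rank $21$, so that, running the inverse of the replanting construction (see \autoref{s.Neimeier_replanting} and \autoref{Niemeier_back}), each $\fL$ gives rise to a well-defined $3$-polarized lattice $M \ni h$ with $\rank M = 20$, satisfying all four conditions of \autoref{polarized_lattices}. (Conditions \iref{isotropic-like} and (4) translate into root-freeness of $S$ and the absence of vectors $e$ with $e^2=2$, $e\cdot\hbar=4$, both of which hold because $\fL$ is geometric, \cf. \autoref{rem.roots} and \autoref{prop.S}\iref{S.eh=4}; the primitive embedding into $\bL$ with even orthogonal complement is exactly what \autoref{Niemeier_back} supplies.) By \autoref{polarized_lattices}, there is then a smooth cubic $X$ with $M_X \cong M$ (as $3$-polarized lattices); moreover $\MM_M \subset \MM$ has codimension $\rank M - 1 = 19$, and since $\rank M = 20$ is maximal (it cannot exceed $21$, and equality would force $T_X$ to be negative definite of rank $2$, contradicting the Hodge--Riemann signature $(\cdot,2)$ — in fact here $T = M^\perp$ has rank $3$), the locus $\MM_M$ is nonempty and any $X$ with this $M_X$ is ``generic'' in the sense that $M_X$ is exactly the given lattice and not a proper overlattice of it.

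Next I would establish existence of a cubic with the prescribed \emph{graph of planes}. By \autoref{planeclasses}, the planes in any such $X$ are in bijection with the $\hbar$-planes in $S(M_X) \cong S$, which by construction is precisely $\fL$; and two planes meet at a point, in a line, or are disjoint according to the value $p_1\cdot p_2 \in \{1,-1,0\}$, which is the same datum as the edge multiplicity $l_1\cdot l_2 - 1 \in \{0,2,1\}$ recorded in the graph $\fL$ (matching the dictionary in \autoref{section.planes} and in \autoref{s.admissible_sets}). Hence $\Fn X$ is graph-isomorphic to $\fL$, giving existence. For the three specific configurations I would then \emph{identify} $X$ with a known cubic: since $\Lmax1$ and $\Lsub2$ have $405$ and $357$ planes and the Fermat cubic \eqref{Fcubic} (resp. the Clebsch--Segre cubic \eqref{CScubic}) has at least that many, and since both explicit cubics already carry a full-rank lattice $M$ satisfying \autoref{polarized_lattices}, they realize the same $3$-polarized lattice; for $\Lmisc1$ one simply records the abstract cubic produced above.

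For uniqueness I would argue as follows. Suppose $X_1, X_2$ are two smooth cubics with $\Fn{X_i}$ graph-isomorphic to $\fL$. The $405$ (resp.\ $357$, $351$) plane classes, together with $h_{X_i}$, span $M_{X_i}$ (indeed they span a finite-index sublattice of $S$ whose saturation is all of $S$ because $\fL$ is saturated and of maximal rank), so a graph isomorphism $\Fn{X_1}\to\Fn{X_2}$ already determines an isometry of the spans of the plane classes, hence — after checking it carries $h_{X_1}$ to $h_{X_2}$, which is forced since $h$ is the unique class with $h^2=3$, $h\cdot l=1$ for all planes $l$, intrinsic to the graph data via \autoref{prop.S} — a polarized isometry $M_{X_1}\to M_{X_2}$. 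The point is that the Hodge structure is rigid here: because $\rank M_{X_i}=20$, the transcendental lattice $T_{X_i}$ has rank $3$ with signature $(1,2)$, and the period $\omega_{X_i}$ is the (unique up to conjugation) isotropic line in $T_{X_i}\otimes\C$ with $x\cdot\bar x<0$. Thus, having matched $M$ and $T$ and fixed the positive sign structure, one can extend the isometry $M_{X_1}\to M_{X_2}$ to an isometry $H^4(X_2)\to H^4(X_1)$ sending $\omega_{X_2}$ to $\omega_{X_1}$ (possibly after replacing it by $\bar\omega_{X_1}$, which corresponds to composing with an anti-holomorphic map); \autoref{construction_maps} then yields a projective isomorphism $X_1\to X_2$ over $\C$, proving uniqueness up to $PGL(\C,6)$.

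The main obstacle I anticipate is \emph{not} the abstract existence, which is a direct application of \autoref{polarized_lattices}, but rather making the transcendental side rigid enough that \autoref{construction_maps} applies cleanly: one must verify that the period $\omega_X$ is uniquely determined (up to conjugation) by the pair $(M_X, T_X)$, i.e.\ that the relevant moduli space $\MM_M$ consists of a single point up to the action of $\OG(T_X)$ and complex conjugation, and that the identification of the three concrete transcendental lattices $T\cong-[6,3,6]$, $-[2,1,18]$, $-[6,0,6]$ (read off from \eqref{eq.Lmax.1}--\eqref{eq.Lmisc.1}) is correct and each admits the needed orientation data. A secondary technical point is checking that a graph isomorphism of $\Fn{X_i}$ genuinely lifts to a lattice isometry of $M_{X_i}$ preserving $h$ — this is where one uses that $\fL$ generates $S$ rationally, so that inner products among plane classes reconstruct the form, together with the characterization of $h$ as the unique degree-one class orthogonal-complement-free vector of square $3$, so no spurious automorphisms of the abstract graph fail to come from isometries.
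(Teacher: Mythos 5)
Your overall strategy (surjectivity of the period map for existence, global Torelli for uniqueness) matches the paper's, and the existence step is essentially correct. But there are numerical errors and a genuine gap in the uniqueness argument.

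First, the numerics. Each $S=\spn_2\fL$ has rank $21$, and since $\rank S=\rank M$ for the inverse of the replanting construction, the resulting $3$-polarized lattice has $\rank M=21$, not $20$. Consequently $T=M^\perp\subset\bL$ has rank $2$ and is \emph{negative definite} of signature $(0,2)$ --- consistent with the lattices $-[6,3,6]$, $-[2,1,18]$, $-[6,0,6]$ you yourself cite, which are all of rank $2$. Your parenthetical claim that $\rank M=21$ would contradict the Hodge--Riemann signature is wrong: the period domain condition $\omega^2=0$, $\omega\cdot\bar\omega<0$ requires $T\otimes\R$ to contain a negative definite $2$-plane, which a negative definite rank-$2$ lattice supplies exactly (and uniquely, up to conjugation). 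Worse, this propagates: if $T$ really had signature $(1,2)$ as you assert, the isotropic line with $x\cdot\bar x<0$ would move in a one-parameter family, the period would \emph{not} be rigid, and your uniqueness argument would collapse. The rigidity you invoke holds precisely because $T$ is negative definite of rank $2$.

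Second, and more importantly, the step ``one can extend the isometry $M_{X_1}\to M_{X_2}$ to an isometry $H^4(X_2)\to H^4(X_1)$ sending $\omega_{X_2}$ to $\omega_{X_1}$'' is where the real work lives, and you do not justify it. A polarized isometry of $M$ need not extend to an isometry of $\bL$ at all, and even when $M$ admits a primitive embedding with even orthogonal complement, there may be several inequivalent such embeddings modulo $O^+(\bL)$, giving non-isomorphic cubics with the same $M$. The paper disposes of this by Nikulin's classification: the embeddings are parametrized by the double coset $O(M)\backslash\Aut(\discr M)/O^+(T)$, and one then verifies computationally (via \texttt{GRAPE}) that $\Aut\fL=O_h(M)\to\Aut(\discr T)$ is surjective and that $T$ admits an orientation-reversing autoisometry --- the first kills the left factor, the second handles the passage from $O^+(T)$ to $O(T)$ (equivalently, shows complex conjugation of the period does not produce a new $O^+(\bL)$-orbit). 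Your proposal offers no substitute for this, so uniqueness remains unproven. You also implicitly identify a graph isomorphism with a lattice isometry of $M$; this is correct here because $\fL$ rationally spans $S$ and the edge data reconstructs the form, but it is worth stating that this identification is precisely $\Aut\fL=O_h(M)$, the group entering the double-coset computation.

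Finally, the passage identifying the abstract extremal cubics with the Fermat and Clebsch--Segre cubics is not part of this proposition --- the paper treats it separately (\autoref{section:Fermat}, \autoref{lemma.HS}) precisely \emph{after} the uniqueness is established, and your appeal to ``at least that many planes'' presupposes \autoref{th.main}.
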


%
%

\proof
Let
$\fL = \Lmax1$. The lattice $S = \spn_2 \fL \subset \Lambda$
is of rank $21$ and
has no
proper mild extension, see \autoref{lem.mild.ext}.
According to \autoref{Niemeier_back}, the $3$-polarized lattice $M \ni h$
obtained from $S \ni \hbar$ by the
inverse replanting procedure (\cf. \autoref{s.Neimeier_replanting})
admits a primitive embedding to $\bL$ with even orthogonal complement.
Thus, the existence of a smooth cubic $X \subset \Cp5$ such that
$\Fn X$ is graph isomorphic to $\fL$ follows from the surjectivity of the period map
(see \autoref{polarized_lattices}).

According to the global Torelli theorem~\ref{construction_maps},
the projective equivalence classes of smooth cubics $X \subset \Cp5$ such that
$\Fn X$ is graph isomorphic to $\fL$ (equivalently,
the $3$-polarized lattice $M_X \ni h_X$
is isomorphic to $M \ni h$, \cf. \autoref{lem.mild.ext})
are in a natural bijection with the
$O^+(\bL)$-orbits
of primitive embeddings
$M \hookrightarrow \bL$ such that $M^\perp$ is even,
where $O^+(\bL)$ is the group
of
auto-isometries of $\bL$ preserving the positive sign structure.

The classification of embeddings can be obtained using Nikulin's
theory~\cite{Nikulin:forms}.
For any embedding $M \hookrightarrow \bL$ such that $M^\perp$ is even,
the genus of the transcendental lattice $T = M^\perp$ is determined by
the discriminant $\discr M \cong -\discr T$. In our case, this implies that
$T \cong -[ 6, 3, 6 ]$.
The isomorphism classes of embeddings under consideration are in a bijection
with
$$
O(M) \backslash \Aut(\discr M) / O^+(T).
$$
Using \texttt{GRAPE}, one can check that the natural homomorphism
$$\Aut\fL = O_h(M) \to\Aut(\discr T)$$
is surjective, and to complete the proof of the uniqueness
there remains to notice that $T$ admits an orientation reversing autoisometry.

If $\fL = \Lmisc1$, the proof is
literally the same except that now $T \cong -[ 6, 0, 6 ]$.

Likewise,
in the case $\fL = \Lsub2$ the proof is literally the same except that now
we have $T \cong -[ 2, 1, 18 ]$. In view of \autoref{lem.mild.ext}, we do not
need to consider the graph isomorphic set $\Lsub1\subset\bN(24\bA_1)$, as it
results in the same lattice~$M$.
\endproof

As an immediate consequence of the uniqueness given
by \autoref{proposition_405_357_351},
we conclude that $\Lmax1$ is graph isomorphic the configuration of planes
in the Fermat cubic, see \autoref{section:Fermat}.

\lemma\label{lemma.HS}
The configuration of planes in the Clebsch--Segre cubic $Y$, see \autoref{section:HS},
is graph isomorphic to $\Lsub1 \simeq \Lsub2$.
\endlemma

\proof
The only other alternative would be $\Fn Y \simeq \Lmax1$.
However, $\Lmax1$ does not admit a faithful action of $\SG7$
(as $7!$ does not divide $\ls|\Aut \Lmax1| = 349920$). Alternatively,
by \autoref{real_structure} and \eqref{eq.Lmax.1}, the Fermat cubic does not admit a real structure
with respect to which the classes of the real planes span a lattice of rank $21$
(\cf. a more detailed argument in \autoref{proof.main_real}).
\endproof

\autoref{th.main} is an immediate consequence of \autoref{proposition_405_357_351}
and \autoref{lemma.HS}.
\qed


\subsection{Proof of \autoref{th.main_real}}\label{proof.main_real}

Let $Z \subset \Cp5$ be a smooth real cubic and $c\: Z \to Z$ the real
structure.
By \autoref{planeclasses_real}, the classes of the real planes in $Z$ span over $\Q$
the sublattice $M^c_Z := M_Z \cap \Ker(1-c^*)$.
Perturbing, if necessary, the period of $Z$ (see \autoref{polarized_lattices}),
we can assume that
$M_Z = M^c_Z$
\ie,
\emph{all} planes contained in~$Z$ are real.
Then, if $Z$ contains at least $357$ such planes,
\autoref{th.main} implies that
$Z$ is projectively equivalent to either the Fermat cubic of the Clebsch--Segre cubic.
In the latter case,
$T_Z\cong-[ 6, 3, 6 ]$
and the assumption that all planes in $Z$ are real contradicts
\autoref{real_structure}.
\qed

\subsection{Proof of \autoref{nodal}}\label{proof.nodal}

Given a cubic fourfold $Y$ with a node $p$, projection from $p$ gives a
birational map $$\pi: Y\dashrightarrow \PP^4$$ which contracts the tangent
cone of $p$ to a K3 surface $S\subset \PP^4$. As one easily checks, if
$P\subset Y$ is a plane, then $\pi(P)$ is either a line (if $P$ contains $p$)
or $\pi(P)$ is a plane in $\PP^4$ intersecting $S$ in a (possibly reducible) conic.\mnote{\2Dg:
irreducible?}
Now, given a
plane $V\subset \PP^4$, we claim that there can be at most one plane
$P\subset Y$ so that $V=\pi(P)$. Indeed, note that $H=\pi^{-1}(V)$ is a
$\PP^3$ passing through $p$. The cubic $F$ defining $Y$ splits when
restricted to $H$ and only two cases can occur: either $Y\cap H=P\cup P'\cup
P''$ for two planes $P',P''$ (in which case both $P'$ and $P''$ must pass
through $p$); or $Y\cap H=P\cup Q$ where $Q$ is an irreducible quadric
surface. In either case, there is at most one plane in $H$ that projects to
$P$, so we get the claim.

In all, we see that the number of planes is bounded by the sum of the number
of lines on $S$ (which is {\4at most} $42$, see \cite{degt:lines})
and the number of conics on $S$ (which is
{\4at most} $285$, see \cite{degt:conics}).
{\4Thus, altogether,}
there can be at most $42+285=327$ planes in a nodal
cubic.
{\4However, a sextic $K3$-surface with $261$ or more conics has no
lines (see \cite{degt:conics}); hence, the above bound is reduced down to
$42+260=302$.}
\qed

\example\label{ex.291}
{\4The $1$-parameter family of sextic $K3$-surfaces with the maximal number
$42$ of lines has a member with $249$ conics (counting both irreducible and
reducible ones).
Since the construction in
\autoref{proof.nodal}
is
obviously invertible, this gives us a nodal cubic fourfold with $291$ planes.
In fact, conjecturally, $249$ is the maximal number of conics on a sextic
$K3$-surface \emph{containing at least one line}; this fact motivates our
conjecture that $291$ is the sharp upper bound on the number of planes in a
nodal cubic fourfold.}
\endexample

{
\let\.\DOTaccent
\def\cprime{$'$}
\bibliographystyle{amsplain}
\bibliography{cubics}
}

\end{document}